\title{Rigidity of free boundary MOTS}
\author{Abraão Mendes}
\newtheorem{thm}{Theorem}[section]
\newtheorem{lemma}[thm]{Lemma}
\newtheorem{proposition}[thm]{Proposition}
\theoremstyle{remark}
\newtheorem{remark}[thm]{Remark}
\DeclareMathOperator{\divergente}{div}
\newcommand{\p}{\partial}
\newcommand{\R}{\mathbb{R}}
\renewcommand{\L}{\mathcal L}
\renewcommand{\div}{\divergente}
\DeclareMathOperator{\II}{II}
\DeclareMathOperator{\tr}{tr}
\DeclareMathOperator{\real}{Re}
\renewcommand{\sl}{\textsl}
\renewcommand{\Re}{\real}
\address{Instituto de Matemática, Universidade Federal de Alagoas, Maceió, AL, Brazil}
\email{abraao.mendes@im.ufal.br}
\begin{document}

\onehalfspacing

\begin{abstract}
The aim of this work is to present an initial data version of Hawking's theorem on the topology of back hole spacetimes in the context of manifolds with boundary. More precisely, we generalize the results of G. J. Galloway and R. Schoen \cite{GallowaySchoen} and G. J. Galloway \cite{Galloway2008,Galloway2018} by proving that a compact free boundary stable marginally outer trapped surface (MOTS) $\Sigma$ in an initial data set with boundary satisfying natural dominant energy conditions (DEC) is of positive Yamabe type, i.e. $\Sigma$ admits a metric of positive scalar curvature with minimal boundary, provided $\Sigma$ is outermost. To do so, we prove that if $\Sigma$ is a compact free boundary stable MOTS which does not admit a metric of positive scalar curvature with minimal boundary in an initial data set satisfying the interior and the boundary DEC, then an outer neighborhood of $\Sigma$ can be foliated by free boundary MOTS $\Sigma_t$, assuming that $\Sigma$ is weakly outermost. Moreover, each $\Sigma_t$ has vanishing outward null second fundamental form, is Ricci flat with totally geodesic boundary, and the dominant energy conditions saturate on $\Sigma_t$.
\end{abstract}

\maketitle

\section{Introduction}

In a seminal paper, S. W. Hawking \cite{Hawking} showed, among other things, that closed cross-sections of the event horizon in $(3+1)$-dimensional asymptotically flat stationary black hole spacetimes obeying the \sl{spacetime} dominant energy condition (DEC) are topologically $2$-spheres. Hawking's theorem extends to outer apparent horizons, i.e. \sl{weakly outermost} marginally outer trapped surfaces (MOTS) in more general (not necessarily stationary) black hole spacetimes.

In \cite{GallowaySchoen}, G. J. Galloway and R. Schoen extended Hawking's theorem to higher dimensional black hole spacetimes by proving that cross-sections of the event horizon (in the stationary case) and outer apparent horizons (in the general case) are of positive Yamabe type, i.e. they admit metrics of positive scalar curvature. Their result is consistent with the example presented by R.~Emparan and H. S. Reall \cite{EmparanReall} of a $(4+1)$-dimensional stationary vacuum black hole spacetime with horizon topology $S^2\times S^1$. 

We paraphrase Galloway-Schoen's result as follows. Definitions are given in Section \ref{section.2}.

\begin{thm}[\cite{GallowaySchoen}]\label{thm.GallowaySchoen}
Let $(M,g,K)$ be an $(n+1)$-dimensional, $n\ge2$, initial data set satisfying the DEC. If $\Sigma$ is a closed stable MOTS in $(M,g,K)$, then either:
\begin{enumerate}
\item $\Sigma$ admits a metric of positive scalar curvature or
\item $\Sigma$ is Ricci flat, the DEC saturates on $\Sigma$, and $\Sigma$ has vanishing outward null second fundamental form.\label{item.2}
\end{enumerate}
\end{thm}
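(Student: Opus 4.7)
\emph{Strategy.} My plan is to use MOTS stability to produce a scalar-curvature inequality on $\Sigma$, and then to invoke Yamabe-type dichotomies on closed manifolds. Since $\Sigma$ is stable, the principal eigenvalue $\lambda_1$ of the (non-self-adjoint) MOTS stability operator $L$ is nonnegative and is attained by a positive eigenfunction $\phi$. The Galloway--Schoen symmetrization trick then applies: from $L\phi \geq 0$, I divide by $\phi$, set $u = \log \phi$, complete the square in the drift term, multiply by $\psi^2$, integrate by parts, and use Cauchy--Schwarz to obtain
\[
\int_\Sigma \bigl(|\nabla \psi|^2 + Q\,\psi^2\bigr) \geq 0 \quad \text{for every } \psi \in C^\infty(\Sigma),
\]
where $Q = \tfrac{1}{2}R_\Sigma - (\mu + J(\nu)) - \tfrac{1}{2}|\chi|^2$ is the scalar potential in $L$. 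The DEC yields $\mu + J(\nu) \geq 0$, hence $Q \leq \tfrac{1}{2}R_\Sigma$, and therefore
\[
\int_\Sigma \Bigl(|\nabla \psi|^2 + \tfrac{1}{2}R_\Sigma\,\psi^2\Bigr) \geq 0 \quad \text{for every } \psi \in C^\infty(\Sigma). \qquad (\star)
\]

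\emph{Dichotomy via the Yamabe invariant ($n \geq 3$).} Let $\sigma$ denote the principal eigenvalue of the conformal Laplacian $L_c = -\tfrac{4(n-1)}{n-2}\Delta + R_\Sigma$, whose sign encodes the Yamabe type of $\Sigma$. Because the coefficient $\tfrac{1}{2}$ in $(\star)$ exceeds the conformal coefficient $\tfrac{n-2}{4(n-1)}$, one cannot read off $\sigma \geq 0$ directly. Instead, I would test $(\star)$ with $\psi = \phi_c^{\alpha}$, where $\phi_c > 0$ is a principal eigenfunction of $L_c$, integrate by parts using $L_c\phi_c = \sigma\phi_c$, and choose $\alpha$ inside the interval where the resulting coefficient of $\int_\Sigma \phi_c^{2\alpha - 2}|\nabla \phi_c|^2$ becomes non-positive. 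The corresponding quadratic in $\alpha$ has positive discriminant, so a valid $\alpha$ exists and forces $\sigma \geq 0$. If $\sigma > 0$, the conformal change $\tilde g = \phi_c^{4/(n-2)}g$ produces a metric of positive scalar curvature on $\Sigma$, giving case (1). If $\sigma = 0$, that same conformal change produces a scalar-flat metric $\tilde g$ on $\Sigma$; assuming case (1) fails, Bourguignon's theorem forces $\tilde g$ to be Ricci flat, and since two Ricci-flat metrics in a single conformal class on a closed manifold differ only by constant scaling, the induced metric on $\Sigma$ is itself Ricci flat.

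\emph{Saturation and the case $n = 2$.} With $R_\Sigma \equiv 0$, inserting $\psi \equiv 1$ into the stronger inequality $\int_\Sigma (|\nabla \psi|^2 + Q\psi^2) \geq 0$ from the first step gives $\int_\Sigma Q \geq 0$, whereas $Q = -\tfrac{1}{2}|\chi|^2 - (\mu + J(\nu)) \leq 0$ pointwise by DEC. Hence $Q \equiv 0$, forcing $\chi \equiv 0$ and the saturation $\mu + J(\nu) = 0$ on $\Sigma$; together with Ricci flatness, this is case (2). For $n = 2$ the conformal step collapses, and I would instead combine $(\star)$ with $\psi \equiv 1$ and Gauss--Bonnet to deduce $\chi(\Sigma) \geq 0$: if $\Sigma \cong S^2$ we are in case (1), while $\Sigma \cong T^2$ does not admit PSC by Gromov--Lawson, and the same saturation argument places it in case (2). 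I expect the main technical obstacle to be the upgrading from $(\star)$ to a statement about the conformal Laplacian via the $\phi_c^\alpha$ test; the remaining steps are the standard symmetrization mechanics together with classical scalar-curvature rigidity.
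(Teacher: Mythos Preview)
The paper does not prove this theorem---it is quoted from \cite{GallowaySchoen}---but it does prove the free-boundary analogue (Proposition~\ref{prop.infinitesimal}) via Lemmas~\ref{lemma.8} and~\ref{lemma.9}, and the closed case is recovered by dropping the boundary terms. Your symmetrization step, producing $\int_\Sigma(|\nabla\psi|^2+Q\psi^2)\ge0$ for all $\psi$, is exactly the boundaryless version of Lemma~\ref{lemma.8}. For the Yamabe dichotomy, however, the paper proceeds differently from your $\phi_c^\alpha$ test: it takes the positive first eigenfunction $u$ of $\mathcal L_0=-\Delta+Q$, conformally changes to $\hat\gamma=u^{2/(n-1)}\gamma$, and writes $R_{\hat\gamma}^\Sigma$ as a sum of manifestly nonnegative terms (equation~\eqref{eq.4}). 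When $\Sigma$ carries no PSC metric this forces $R_{\hat\gamma}^\Sigma\equiv0$, and the explicit formula then yields $u$ constant, $P\equiv0$, and hence $R^\Sigma\equiv0$ for the \emph{induced} metric in one stroke; Ricci flatness of $\gamma$ itself then follows from Bourguignon/Kazdan--Warner.

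Your route has a real gap at precisely this transfer. From $\sigma=0$ you correctly get that $\tilde g=\phi_c^{4/(n-2)}g$ is scalar-flat, and Bourguignon makes $\tilde g$ Ricci flat. But the sentence ``since two Ricci-flat metrics in a single conformal class differ only by constant scaling, the induced metric on $\Sigma$ is itself Ricci flat'' is circular: that Obata-type uniqueness requires knowing \emph{both} metrics are Ricci flat, which is exactly what you are trying to conclude about $g$. Consequently your saturation paragraph opens with ``With $R_\Sigma\equiv0$'' unjustified, in every dimension including $n=2$ (Gauss--Bonnet on $T^2$ gives only $\int R_\Sigma=0$, not pointwise vanishing). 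The fix is already latent in your own computation: when $\sigma=0$, choosing $\alpha$ so that the coefficient of $\int\phi_c^{2\alpha-2}|\nabla\phi_c|^2$ is strictly negative forces that integral to vanish, so $\phi_c$ is constant and $R_\Sigma\equiv0$ pointwise; and if you retain the $P$-term rather than discarding it en route to $(\star)$, the same equality case also gives $P\equiv0$. Then Bourguignon applied directly to the induced metric closes the argument. The paper's $u^{2/(n-1)}$ route sidesteps this subtlety because the nonnegative decomposition of $R_{\hat\gamma}^\Sigma$ makes the equality analysis immediate.
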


Assuming the initial data set $(M,g,K)$ can be embedded into a spacetime obeying the spacetime DEC and $\Sigma$ is a closed \sl{outermost} MOTS in $(M,g,K)$, Galloway \cite{Galloway2008} was able to rule out the `exceptional circumstances' (item \eqref{item.2}) of Theorem \ref{thm.GallowaySchoen}; that is, under these hypotheses, $\Sigma$ in fact admits a metric of positive scalar curvature.

A few years ago, Galloway \cite{Galloway2018} was able to drop the embeddedness assumption on the initial data set and prove a purely initial data version of the results presented in \cite{Galloway2008} and \cite{GallowaySchoen}. Among other things, he proved the following rigidity result.

\begin{thm}[\cite{Galloway2018}]\label{thm1.Galloway2018}
Let $(M,g,K)$ be an $(n+1)$-dimensional, $n\ge2$, initial data set satisfying the DEC. Suppose $\Sigma$ is a closed weakly outermost MOTS in $(M,g,K)$ that does not admit a metric of positive scalar curvature. Then there exists an outer neighborhood $U\cong[0,\epsilon)\times\Sigma$ of $\Sigma$ in $M$ such that, for each $t\in[0,\epsilon)$, the following hold:
\begin{enumerate}
\item $\Sigma_t\cong\{t\}\times\Sigma$ is a MOTS. In fact, $\Sigma_t$ has vanishing outward null second fundamental form.
\item $\Sigma_t$ is Ricci flat with respect to the induced metric.
\item The DEC saturates on $\Sigma_t$.
\end{enumerate}
\end{thm}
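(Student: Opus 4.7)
The plan is to follow the scheme of \cite{Galloway2018}: apply Theorem \ref{thm.GallowaySchoen} to force infinitesimal rigidity at $\Sigma$, construct an outer foliation by surfaces of prescribed null expansion, use the weakly outermost hypothesis to force the prescribed expansion to vanish, and finally invoke the rigidity alternative leaf by leaf. For \textbf{Step 1}, since $\Sigma$ is closed, stable, and admits no metric of positive scalar curvature, Theorem \ref{thm.GallowaySchoen} forces alternative \eqref{item.2}: $\Sigma$ is Ricci flat, the DEC saturates on $\Sigma$, and the outward null second fundamental form $\chi$ of $\Sigma$ vanishes. Inspection of the proof also yields that the principal eigenvalue of the MOTS stability operator $L$ on $\Sigma$ satisfies $\lambda_1(L)=0$; let $\phi>0$ be an associated eigenfunction and $\phi^*>0$ the positive principal eigenfunction of the formal adjoint $L^*$.

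\textbf{Step 2 (foliation).} Write nearby surfaces as normal graphs $\Sigma_u:=\{\exp_x(u(x)\nu(x)):x\in\Sigma\}$ and set $\mathcal{F}(u):=\theta^+(\Sigma_u)\in C^{0,\alpha}(\Sigma)$, so that $\mathcal{F}(0)=0$ and $D\mathcal{F}(0)=L$. Since $\ker L=\mathbb{R}\phi$ and the image of $L$ has codimension $1$ with cokernel spanned (via duality) by $\phi^*$, a Lyapunov--Schmidt reduction applied to the enlarged map $\tilde{\mathcal{F}}(u,s):=\mathcal{F}(u)-s\phi^*$ produces a smooth family $u_t\in C^{2,\alpha}(\Sigma)$, $t\in[0,\epsilon)$, with $u_0=0$, $\partial_t u_t|_{t=0}=\phi$, and $\mathcal{F}(u_t)=s(t)\phi^*$ for a real function $s$ with $s(0)=0$. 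Because $\phi>0$, the surfaces $\Sigma_t:=\Sigma_{u_t}$ foliate an outer neighborhood $U\cong[0,\epsilon)\times\Sigma$ of $\Sigma$.

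\textbf{Step 3 (the expansion vanishes).} Because $\phi^*>0$, if $s(t_0)<0$ for some $t_0>0$ then $\Sigma_{t_0}$ would be strictly outer trapped and lie strictly outside $\Sigma$, contradicting weakly outermost; hence $s\ge0$. Differentiating $\mathcal{F}(u_t)=s(t)\phi^*$ in $t$ gives, along each leaf $\Sigma_t$, a relation of the form $L_t\dot u_t=s'(t)\phi^*$ (suitably transported), where $L_t$ is the stability operator of $\Sigma_t$; pairing with the positive adjoint principal eigenfunction $\phi^*_t$ of $L_t^*$ relates $s'(t)$ to $\lambda_1(L_t)$. Combined with a second-order expansion of $\mathcal{F}$ at $0$ using the saturated DEC and $\chi=0$, and a strong maximum principle applied to the leaves, one forces $s(t)\equiv 0$. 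Consequently, each $\Sigma_t$ is a MOTS.

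\textbf{Step 4 (rigidity on each leaf).} Each $\Sigma_t$ is diffeomorphic to $\Sigma$, hence also admits no metric of positive scalar curvature, and it is stable as a leaf of a MOTS foliation with positive lapse $\dot u_t>0$. Applying Theorem \ref{thm.GallowaySchoen} to $\Sigma_t$ then places it in alternative \eqref{item.2}, which is exactly items (1)--(3) of the conclusion (the vanishing of $\chi$ on $\Sigma_t$ being the strengthening asserted in item (1)). The main obstacle is Step 3: the weakly outermost assumption alone only gives $s\ge0$, and upgrading this to $s\equiv 0$ requires a careful evolution/maximum-principle analysis exploiting both the saturated DEC and the vanishing of $\chi$ on $\Sigma$.
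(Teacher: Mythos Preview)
Your four-step scheme matches the strategy the paper uses to prove the free boundary generalization, Theorem~\ref{thm.4} (which specializes to the cited closed result). Steps~1, 2, and~4 are essentially right. One technical difference: in Step~2 the paper (Lemma~\ref{lemma.foliation}) builds the foliation so that $\theta^+$ is a \emph{constant} $\theta^+(t)$ on each leaf, rather than a multiple $s(t)\phi^*$ of a fixed function. This is not just cosmetic; it makes Step~3 tractable as an ODE in $t$.

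The genuine gap is Step~3, as you yourself flag. Your proposed mechanism---a second-order Taylor expansion of $\mathcal F$ at $0$ together with a strong maximum principle, and a pairing identity relating $s'(t)$ to $\lambda_1(L_t)$---does not close the argument: the Taylor information is localized at $t=0$, and the pairing relation is useless without an a priori sign on $\lambda_1(L_t)$ for $t>0$, which you do not have. The paper's argument is different and works at every $t$. With $\theta^+(t)$ constant in space, the first variation formula \eqref{eq.first.variation.theta} gives
\[
L_t\varphi \ge \frac{d\theta^+}{dt}-c\,\theta^+ = e^{ct}\frac{d}{dt}\bigl(e^{-ct}\theta^+\bigr)
\]
for a suitable constant $c$. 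If $\frac{d}{dt}(e^{-ct}\theta^+)>0$ at some $t$, then $L_t\varphi>0$ strictly on $\Sigma_t$; feeding this into the symmetrization of Lemma~\ref{lemma.8} forces $\lambda_1(\mathcal L_t)>0$, and then the conformal argument of Lemma~\ref{lemma.9} would produce a metric of positive scalar curvature on $\Sigma_t\cong\Sigma$, a contradiction. Hence $e^{-ct}\theta^+(t)$ is nonincreasing, so $\theta^+(t)\le\theta^+(0)=0$; combined with $\theta^+(t)\ge0$ from weakly outermost, this gives $\theta^+\equiv0$. The idea you are missing is that the Yamabe-type alternative is invoked not only at $\Sigma$ (your Step~1) and after the fact (your Step~4), but \emph{inside} Step~3, leaf by leaf, to rule out any increase of the (exponentially weighted) expansion.
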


As an immediate consequence of Theorem \ref{thm1.Galloway2018}, he obtained the following:

\begin{thm}[\cite{Galloway2018}]\label{thm2.Galloway2018}
Let $(M,g,K)$ be an $(n+1)$-dimensional, $n\ge2$, initial data set satisfying the DEC. If $\Sigma$ is a closed outermost MOTS in $(M,g,K)$, then $\Sigma$ is of positive Yamabe type, i.e. $\Sigma$ admits a metric of positive scalar curvature.
\end{thm}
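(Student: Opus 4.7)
The plan is to argue by contradiction and use Theorem~\ref{thm1.Galloway2018} to manufacture a MOTS strictly outside $\Sigma$, which will be forbidden by the outermost hypothesis. First I would suppose, for contradiction, that $\Sigma$ does not admit a metric of positive scalar curvature. Since a MOTS satisfies $\theta^+ = 0 \le 0$ and is therefore weakly outer trapped, being outermost is a strictly stronger condition than being weakly outermost; in particular $\Sigma$ is weakly outermost, and every hypothesis of Theorem~\ref{thm1.Galloway2018} is in force.

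Next I would apply Theorem~\ref{thm1.Galloway2018} to produce an outer neighborhood $U\cong[0,\epsilon)\times\Sigma$ of $\Sigma$ in $M$ smoothly foliated by MOTS $\Sigma_t$, with $\Sigma_0=\Sigma$. For any fixed $t\in(0,\epsilon)$ the leaf $\Sigma_t$ is then a MOTS lying strictly to the outside of $\Sigma$, isotopic (and hence homologous) to $\Sigma$ through the leaves of the foliation. This directly contradicts the definition of $\Sigma$ being outermost, so no such $t$ can exist, and the assumption that $\Sigma$ carries no metric of positive scalar curvature must be false.

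The deduction is therefore very short: essentially all of the substantive work has been absorbed into Theorem~\ref{thm1.Galloway2018}. The one point I would verify carefully is that the leaves $\Sigma_t$ with $t>0$ really do violate the definition of ``outermost'' in the precise form stated in Section~\ref{section.2} (typically phrased via an appropriate homology and separation condition in the outer region), but this is immediate from the product structure of the foliation. I do not expect a real obstacle at this stage; the genuine difficulty lies upstream, in Theorem~\ref{thm1.Galloway2018} itself, where the DEC and the obstruction coming from the absence of a positive scalar curvature metric are used to drive the construction of the MOTS foliation through an infinitesimal deformation argument at $\Sigma$ together with a perturbation/continuation argument for the MOTS equation.
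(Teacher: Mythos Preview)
Your argument is correct and matches the paper's treatment: the paper states this theorem as an immediate consequence of Theorem~\ref{thm1.Galloway2018}, and your contradiction argument---outermost implies weakly outermost, apply Theorem~\ref{thm1.Galloway2018}, then any leaf $\Sigma_t$ with $t>0$ is a MOTS (hence weakly outer trapped) outside of and homologous to $\Sigma$---is exactly how that immediacy is realized.
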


Our main goal in the present work is to extend Theorems \ref{thm1.Galloway2018} and \ref{thm2.Galloway2018} to the context of compact \sl{free boundary} MOTS in initial data sets with boundary satisfying the \sl{interior} and the \sl{boundary} dominant energy conditions.

Our first result is the following:

\begin{thm}\label{thm.4}
Let $(M,g,K)$ be an $(n+1)$-dimensional, $n\ge2$, initial data set with boundary and $\Sigma$ be a compact free boundary stable MOTS in $(M,g,K)$. Suppose $(M,g,K)$ satisfies the interior and the boundary DEC. If $\Sigma$ does not admit a metric of positive scalar curvature with minimal boundary and is weakly outermost in $(M,g,K)$, then there exists an outer neighborhood $U\cong[0,\epsilon)\times\Sigma$ of $\Sigma$ in $M$ such that, for each $t\in[0,\epsilon)$, the following hold: 
\begin{enumerate}
\item $\Sigma_t\cong\{t\}\times\Sigma$ is a free boundary MOTS. In fact, $\Sigma_t$ has vanishing outward null second fundamental form.
\item $\Sigma_t$ is Ricci flat and has totally geodesic boundary with respect to the induced metric.
\item The interior DEC saturates on $U$ and $J|_{\Sigma_t}=0$.
\item The boundary DEC saturates along $\p\Sigma_t$ and $(\iota_\varrho\pi)^\top|_{\p\Sigma_t}=0$.
\end{enumerate}
\end{thm}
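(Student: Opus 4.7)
The plan is to follow Galloway's strategy in the proof of Theorem \ref{thm1.Galloway2018}, tracking the boundary contributions at every step. The first step is to extract the right rigidity from free boundary stability. The MOTS stability operator $L$ on $\Sigma$, together with a Robin-type boundary operator $B$ obtained by linearizing the free boundary condition (and carrying the relevant term from the second fundamental form of $\p M$), forms a non-self-adjoint elliptic eigenvalue problem. By a Krein--Rutman argument it admits a real principal eigenvalue $\lambda_1(L,B)$ and a positive principal eigenfunction $\phi$; free boundary stability of $\Sigma$ gives $\lambda_1(L,B) \geq 0$. Substituting $u = \log\phi$ into $L\phi = \lambda_1\phi$, completing the square, and integrating by parts while keeping the boundary integrals produces a weighted inequality whose bulk density involves $R_\Sigma - 2(\mu + J(\nu)) - |\chi|^2$ and whose boundary density involves twice the mean curvature of $\p\Sigma$ in $\Sigma$ minus the boundary DEC quantity. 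If $\lambda_1 > 0$ or any of these quantities failed to saturate, an Escobar-type conformal deformation would produce a metric on $\Sigma$ of positive scalar curvature with minimal boundary, contradicting the hypothesis. Hence $\lambda_1(L,B) = 0$, and the interior and boundary DEC already saturate on $\Sigma$ itself.

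Next I construct the foliation. Following Galloway, I look for normal graphs
\[ \Sigma_t = \{\exp_x(u(t,x)\nu(x)) : x \in \Sigma\}, \]
with $u(0,\cdot)=0$, required to meet $\p M$ orthogonally and to have spatially constant outward null expansion $\theta_{\Sigma_t} \equiv c(t)$, with $c(0)=0$. The linearization of this system at $t=0$ is $Lv = c'(0)$ with $Bv = 0$, which is solvable by the Fredholm alternative because $\lambda_1(L,B)=0$ and one has a single scalar free parameter $c'(0)$; choosing $v = \phi$ then ensures $\p_t u|_{t=0} > 0$, so the foliation moves strictly outward. An implicit function theorem argument in Hölder spaces adapted to the Robin condition produces the family $\Sigma_t$ for $t \in [0,\epsilon)$ and the outer neighborhood $U \cong [0,\epsilon)\times\Sigma$. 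The main technical obstacle lies here: in setting up the correct boundary-adapted function spaces and verifying that the linearized operator with the Robin boundary condition is Fredholm of the right index. This is the step where the free boundary analysis genuinely differs from the closed case of \cite{Galloway2018}, since one must control a boundary operator that explicitly involves the second fundamental form of $\p M$.

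Finally I exploit the weakly outermost hypothesis. Since $\Sigma$ is weakly outermost and $\Sigma_t$ is pushed strictly outward, $c(t) \geq 0$ for all $t$: otherwise $\Sigma_t$ would be outer trapped and lie outside $\Sigma$, contradicting weakly outermost. On the other hand, pairing the evolution equation for $\theta$ against $\phi$ on $\Sigma_t$ and using the interior and boundary DEC-saturation from the first step, an integration by parts with the free boundary Robin data yields $c(t) \leq 0$. Hence $c \equiv 0$, so each $\Sigma_t$ is a free boundary MOTS, proving item (1). Substituting $c \equiv 0$ back into the evolution equation and invoking the strong maximum principle with Robin boundary data forces $\chi_{\Sigma_t} \equiv 0$ on each leaf, together with the bulk saturation $\mu + J(\nu) = 0$ and $J|_{\Sigma_t} = 0$ (item (3)) and the boundary saturation $(\iota_\varrho\pi)^\top|_{\p\Sigma_t} = 0$ (item (4)). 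The Gauss equation combined with $\chi_{\Sigma_t} = 0$ and the bulk DEC saturation then gives Ricci flatness of each $\Sigma_t$, and the boundary Gauss--Codazzi relations combined with the boundary saturation give that $\p\Sigma_t$ is totally geodesic in $\Sigma_t$, proving item (2).
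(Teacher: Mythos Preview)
Your outline tracks the paper's architecture (infinitesimal rigidity, then foliation, then use weakly outermost to force $\theta^+\equiv 0$, then apply the rigidity leafwise), but there is a genuine gap in the third step, and a smaller one in how you derive Ricci flatness.

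\textbf{The main gap.} You claim that ``pairing the evolution equation for $\theta$ against $\phi$ on $\Sigma_t$ and using the interior and boundary DEC-saturation from the first step'' yields $c(t)\le 0$. But your first step only establishes saturation on $\Sigma_0$; it says nothing about $\Sigma_t$ for $t>0$, and the DEC \emph{inequalities} alone do not produce a sign for $\int_{\Sigma_t}L_t\varphi$ or for $c'(t)$. What is actually needed at each $t$ is the topological hypothesis that $\Sigma_t\cong\Sigma$ admits no metric of positive scalar curvature with minimal boundary. The paper argues by contradiction: if $\tfrac{d}{dt}(e^{-ct}\theta^+)>0$ at some $t$ (the exponential weight absorbs the $\theta^+\tau$ term), then $L_t\varphi>0$, and the identity $L_t\varphi/\varphi=-|Y|^2+\div Y+Q$ with $Y=X-\nabla\ln\varphi$ forces $\lambda_1(\mathcal L_t)>0$ for the \emph{self-adjoint} operator $\mathcal L_t=-\Delta+Q$ with Robin condition $\tfrac{\partial u}{\partial\nu_t}=(\II^{\p M}(N_t,N_t)-\langle X,\nu_t\rangle)u$. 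Feeding the positive first eigenfunction of $\mathcal L_t$ into Lemma~\ref{lemma.9} then produces a PSC metric with minimal boundary on $\Sigma_t$, a contradiction. Your integration-by-parts shortcut does not invoke this obstruction and cannot close the inequality. Once $\theta^+(t)\equiv 0$, the evolution equation gives $L_t\varphi=0$ with $\varphi>0$, so each $\Sigma_t$ is a \emph{stable} free boundary MOTS, and only then does the step-one rigidity (Proposition~\ref{prop.infinitesimal}) apply leaf by leaf; it is not a direct strong-maximum-principle consequence as you suggest.

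\textbf{The secondary gap.} Ricci flatness and the totally geodesic boundary of $\Sigma_t$ do not follow from the Gauss and Codazzi equations together with $\chi^+_t=0$ and DEC saturation; those yield only $R^{\Sigma_t}=0$ and $H^{\p\Sigma_t}=0$. Upgrading scalar flatness to Ricci flatness with totally geodesic boundary again requires the no-PSC-with-minimal-boundary hypothesis, via a conformal rigidity argument (Gauss--Bonnet when $n=2$, and a deformation lemma of Carlotto--Li when $n\ge 3$; see Lemma~\ref{lemma.9}).

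A minor remark: the paper deliberately avoids a Krein--Rutman argument for $(L,B)$ (see the remark preceding Lemma~\ref{lemma.formal.adjoint}); stability is used only through the existence of some $\phi>0$ with $L\phi\ge 0$ and $B\phi=0$, and the eigenvalue analysis is carried out on the symmetrized operator $\mathcal L_0$.
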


The 1-forms $J$ and $(\iota_{\varrho}\pi)^\top$ are defined in Section \ref{section.2}.

As an immediate consequence of Theorem \ref{thm.4}, we obtain the following topological obstruction result for the existence of free boundary outermost MOTS in initial data sets under natural energy hypotheses, which can be seen as an initial data version of Hawking's theorem in the context of manifolds with boundary.

\begin{thm}
Let $(M,g,K)$ be an $(n+1)$-dimensional, $n\ge2$, initial data set with boundary and $\Sigma$ be a compact free boundary stable MOTS in $(M,g,K)$. Suppose $(M,g,K)$ satisfies the interior and the boundary DEC. If $\Sigma$ is outermost in $(M,g,K)$, then $\Sigma$ admits a metric of positive scalar curvature with minimal boundary.
\end{thm}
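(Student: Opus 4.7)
The plan is to derive this topological obstruction as a direct corollary of Theorem \ref{thm.4} via a contradiction argument. Suppose, for the sake of contradiction, that $\Sigma$ does not admit a metric of positive scalar curvature with minimal boundary. Since the hypothesis ``outermost'' is strictly stronger than ``weakly outermost,'' every assumption of Theorem \ref{thm.4} is then in force: $\Sigma$ is a compact free boundary stable MOTS that is weakly outermost in an initial data set with boundary satisfying the interior and boundary DEC, and by assumption $\Sigma$ fails to admit the desired metric.

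Applying Theorem \ref{thm.4} produces an outer neighborhood $U\cong[0,\epsilon)\times\Sigma$ of $\Sigma$ foliated by compact free boundary MOTS $\Sigma_t$. For any $t\in(0,\epsilon)$, the leaf $\Sigma_t$ is then a free boundary MOTS lying strictly to the outside of $\Sigma$ in $M$. This directly contradicts the outermost assumption on $\Sigma$, which by definition precludes the existence of any free boundary weakly outer trapped surface (and in particular any free boundary MOTS) in the outer region of $\Sigma$. The contradiction forces $\Sigma$ to admit a metric of positive scalar curvature with minimal boundary.

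The only potential subtlety is definitional: the argument requires that the notion of ``outermost'' set up in Section \ref{section.2} excludes precisely the configurations produced by the foliation $\{\Sigma_t\}_{t>0}$. Provided ``outermost'' is taken in the standard sense---no free boundary weakly outer trapped hypersurface to the outside of $\Sigma$ in $M$---the implication is immediate. Otherwise one verifies that the leaves $\Sigma_t$, being isotopic to $\Sigma$ through the product structure and satisfying $\Sigma_t\to\Sigma$ as $t\to 0^+$, fall within whichever class of competitor surfaces is forbidden by the definition. This is the only step I expect to require any thought, and it is a routine compatibility check rather than a substantive obstacle; all the real work is carried by Theorem \ref{thm.4}.
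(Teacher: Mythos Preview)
Your argument is correct and matches the paper's approach: the paper states this theorem as an ``immediate consequence'' of Theorem \ref{thm.4} without writing out a separate proof, and your contradiction argument is exactly the intended one. The definitional check you flag is routine here, since the paper defines \emph{outermost} to forbid any weakly outer trapped ($\theta^+\le 0$) surface outside of and homologous to $\Sigma$, and the leaves $\Sigma_t$ for $t>0$ are MOTS (hence $\theta^+=0$) homologous to $\Sigma$ via the product structure.
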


It is important to mention that the compact-with-boundary orientable manifolds which admit metrics of positive scalar curvature with minimal boundary are completely characterized in $n=2,3$ dimensions. In $n=2$, this follows directly from the Gauss-Bonnet theorem. In particular, they are topologically a disk. In $n=3$, the characterization of such manifolds was done by A. Carlotto and C. Li \cite{CarlottoLi2019}.

This paper is organized as follows. In Section \ref{section.2}, we review some background material on MOTS. In Section \ref{section.3}, we state and prove several auxiliary results. In Section \ref{section.4}, we present the proof of Theorem \ref{thm.4}. Finally, in Section \ref{section.5}, we establish a splitting result under natural convexity and volume-minimizing conditions. 

\medskip

{\bf Acknowledgments.} The author would like to thank Márcio H. Batista and Feliciano Vitório for their kind interest in this work. He would also like to thank Marcos P. Cavalcante, Tiarlos Cruz, and Gregory J. Galloway for their valuable comments regarding this paper. The author was partially supported by the National Council for
Scientific and Technological Development – CNPq (Grant 305710/2020-6).

\section{Marginally Outer Trapped Surfaces}\label{section.2}

An \sl{initial data set} $(M,g,K)$ consists of a Riemannian manifold $(M,g)$ and a symmetric $(0,2)$-tensor $K$ on $M$. We assume that $M$ is oriented throughout
this paper. 

The \sl{local energy density} $\mu$ and the \sl{local current density} $J$ of $(M,g,K)$ are given by
\begin{align*}
\mu=\frac{1}{2}\left(R-|K|^2+(\tr K)^2\right)\,\,\,\,\mbox{and}\,\,\,\,J=\div(K-(\tr K)g),
\end{align*}
where $R$ is the scalar curvature of $(M,g)$. The initial data set $(M,g,K)$ is said to satisfy the \sl{dominant energy condition} (DEC) if 
\begin{align}\label{eq.interior.DEC}
\mu\ge|J|\,\,\,\,\mbox{on}\,\,\,\,M.
\end{align}

The \sl{momentum tensor} $\pi$ of $(M,g,K)$ is given by
\begin{align*}
\pi=K-(\tr K)g.
\end{align*}
Then $\mu$ and $J$ can be written in terms of $\pi$ by
\begin{align*}
\mu=\frac{1}{2}\left(R-|\pi|^2+\frac{1}{n}(\tr\pi)^2\right)\,\,\,\,\mbox{and}\,\,\,\,J=\div\pi,
\end{align*}
where $n+1$ is the dimension of $M$, $n\ge2$.

When $M$ is a manifold with boundary, we denote the second fundamental form of $\p M$ in $(M,g)$ by $\II^{\p M}$. More precisely,
\begin{align*}
\II^{\p M}(Y,Z)=\langle\nabla_Y\varrho,Z\rangle,\,\,\,\,Y,Z\in\mathfrak{X}(\p M),
\end{align*}
where $\varrho$ is the outward unit normal of $\p M$ in $(M,g)$. The mean curvature $H^{\p M}$ of $\p M$ in $(M,g)$ is given by
\begin{align*}
H^{\p M}=\tr\II^{\p M}=\div_{\p M}\varrho.
\end{align*}
We define $(\iota_{\varrho}\pi)^\top$ as the restriction of $\iota_{\varrho}\pi=\pi(\varrho,\cdot)$ to tangent vector fields of $\p M$ and say that $(M,g,K)$ satisfies the \sl{boundary DEC} if 
\begin{align*}
H^{\p M}\ge|(\iota_{\varrho}\pi)^\top|\,\,\,\,\mbox{along}\,\,\,\,\p M.
\end{align*} 

The above boundary DEC was introduced by S. Almaraz, L. L. Lima, and L. Mari \cite{AlmarazLimaMari} in the context of proving spacetime positive mass inequalities for asymptotically flat and asymptotically hyperbolic initial data sets with noncompact boundary.

In order to avoid ambiguity, we call \eqref{eq.interior.DEC} the \sl{interior DEC}.

Let $\Sigma$ be a connected two-sided hypersurface in $(M,g)$ with unit normal $N$ and 
\begin{align*}
H=\div_\Sigma N
\end{align*}
be its associated mean curvature. The \sl{null mean curvatures} $\theta^+$ and $\theta^-$ of $\Sigma$ in $(M,g,K)$ are defined by
\begin{align*}
\theta^+=\tr_\Sigma K+H\,\,\,\,\mbox{and}\,\,\,\,\theta^-=\tr_\Sigma K-H.
\end{align*}
Then the hypersurface $\Sigma$ is said to be \sl{outer trapped} if $\theta^+<0$, \sl{weakly outer trapped} if $\theta^+\le0$, and \sl{marginally outer trapped} if $\theta^+=0$. In the latter case, we refer to $\Sigma$ as a \sl{marginally outer trapped surface} (MOTS).

The \sl{null second fundamental forms} $\chi^+$ and $\chi^-$ of $\Sigma$ in $(M,g,K)$ are defined by
\begin{align*}
\chi^+=K|_\Sigma+A\,\,\,\,\mbox{and}\,\,\,\,\chi^-=K|_\Sigma-A,
\end{align*}
where $A$ is the second fundamental form of $\Sigma$ in $(M,g)$. Our sign convention is such that $H=\tr A$ and, in particular, $\theta^\pm=\tr\chi^\pm$.

\begin{remark}
Initial data sets arise naturally in general relativity. In fact, let $(\bar M,\bar g)$ be a spacetime, i.e. a time-oriented Lorentzian manifold. Consider a spacelike hypersurface $M$ in $(\bar M,\bar g)$ and let $g$ be the induced Riemannian metric on $M$ and $K$ be the second fundamental form of $M$ with respect to the future-pointing timelike unit normal $u$ of $M$ in $(\bar M,\bar g)$. Then $(M,g,K)$ is an initial data set. As before, let $\Sigma$ be a two-sided hypersurface in $(M,g)$ with unit normal~$N$. In this setting, $\chi^+$ and $\chi^-$ are the second fundamental forms of $\Sigma$ in $(\bar M,\bar g)$ with respect to the \sl{null normal fields}
\begin{align*}
\ell^+=u|_\Sigma+N\,\,\,\,\mbox{and}\,\,\,\,\ell^-=u|_\Sigma-N,
\end{align*}
respectively. Therefore $\theta^+=\tr\chi^+=\div_\Sigma\ell^+$ is the \sl{future outgoing null expansion scalar} and $\theta^-=\tr\chi^-=\div_\Sigma\ell^-$ is the \sl{future ingoing null expansion scalar} of $\Sigma$ in $(\bar M,\bar g)$. 
\end{remark}

Let $(\Sigma_t)_{|t|<\epsilon}$ be a variation of $\Sigma$ in $M$, $\Sigma=\Sigma_0$, with variation vector field
\begin{align*}
\mathcal V=\frac{\p}{\p t}\big|_{t=0}=\phi N\,\,\,\,\mbox{for some}\,\,\,\,\phi\in C^\infty(\Sigma).
\end{align*}
We may view the null mean curvature $\theta^+$ of $\Sigma_t$ in $(M,g,K)$ as a parameter-dependent function on $\Sigma$. Computations as in \cite{AnderssonEichmairMetzger} show that 
\begin{align}\label{eq.first.variation.theta}
\frac{d}{dt}\big|_{t=0}\theta^+(t,\cdot)=L\phi+\left(-\frac{1}{2}(\theta^+)^2+\theta^+\tau\right)\phi,
\end{align}
where
\begin{align*}
L\phi\triangleq-\Delta\phi+2\langle X,\nabla\phi\rangle+(Q-|X|^2+\div X)\phi
\end{align*}
and
\begin{align*}
Q\triangleq\frac{1}{2}R^\Sigma-(\mu+J(N))-\frac{1}{2}|\chi^+|^2.
\end{align*}
Here, $\Delta$ is the nonpositive Laplace-Beltrami operator, $\nabla$ the gradient, $\div$ the divergence, and $R^\Sigma$ the scalar curvature of $\Sigma$ with respect to the induced metric. Moreover, $X$ is the tangent vector field of $\Sigma$ that is dual to the $1$-form $K(N,\cdot)|_{\Sigma}$, and $\tau=\tr K$. 

When $\Sigma$ is a MOTS, $L$ is called the \sl{MOTS stability operator} of $\Sigma$. This is because in the Riemannian case ($K=0$), saying that $\Sigma$ is a MOTS is equivalent to saying that $\Sigma$ is a minimal hypersurface and, in this case, $L$ reduces to the classical stability operator of minimal surfaces theory. 

It is possible to show that, when $\Sigma$ is a closed MOTS, the operator $L$ admits a real eigenvalue $\lambda_1=\lambda_1(L)$, called its \sl{principal eigenvalue}, such that $\lambda_1\le\Re\lambda$ for any other eigenvalue~$\lambda$. Moreover, the associated eigenfunction $\phi_1$, $L\phi_1=\lambda_1\phi_1$, is unique up to a multiplicative constant, and $\phi_1$ can be chosen to be positive. Also, $\lambda_1$ is nonnegative if, and only if, there exists a positive function $\phi\in C^\infty(\Sigma)$ such that $L\phi\ge0$ (see \cite{AnderssonMarsSimon}). In analogy with the minimal surface case, a closed MOTS $\Sigma$ is said to be \sl{stable} provided $\lambda_1(L)\ge0$. 

Now, we are going to present a notion of \sl{stability} for \sl{capillary} MOTS in initial data sets with boundary introduced by A. Alaee, M. Lesourd, and S.-T. Yau \cite{AlaeeLesourdYau}. We restrict our attention to the \sl{free boundary} case.

Assume that $M$ is a manifold with boundary and $\Sigma$ is a compact-with-boundary \sl{properly embedded} hypersurface in $M$, which means that $\Sigma$ is embedded in $M$ and $\p\Sigma=\Sigma\cap\p M$. We say that $\Sigma$ is \sl{free boundary} in $(M,g)$ if $\Sigma$ meets $\p M$ orthogonally, that is, $\varrho=\nu$ along $\p\Sigma$, where $\nu$ is the outward unit normal of $\p\Sigma$ in $\Sigma$ with respect to the induced metric.

\begin{remark}
The mean curvature $H^{\p\Sigma}$ of $\p\Sigma$ in $\Sigma$ with respect to the induced metric is given by
\begin{align*}
H^{\p\Sigma}=\div_{\p\Sigma}\nu.
\end{align*}
In particular, if $\Sigma$ is free boundary (equivalently, $N$ is tangent to $\p M$ along $\p\Sigma$), then
\begin{align*}
H^{\p M}=\div_{\p M}\varrho=\div_{\p\Sigma}\nu+\langle\nabla_N\varrho,N\rangle=H^{\p\Sigma}+\II^{\p M}(N,N).
\end{align*}
\end{remark}

Let $(\Sigma_t)_{|t|<\epsilon}$ be a variation of $\Sigma=\Sigma_0$ in $(M,g,K)$ by compact-with-boundary properly embedded hypersurfaces and consider the functional
\begin{align*}
\mathcal F[\Sigma_t]=\int_{\Sigma_t}\theta^+(t)\langle\mathcal V,N_t\rangle dv_t+\int_{\p\Sigma_t}\langle\mathcal V,\nu_t\rangle ds_t,
\end{align*}
where
\begin{align*}
\mathcal V=\mathcal V_t=\frac{\p}{\p t}.
\end{align*}
If $\Sigma$ is a free boundary MOTS in $(M,g,K)$, then 
\begin{align}\label{eq.F.functional}
\frac{d}{dt}\big|_{t=0}\mathcal F[\Sigma_t]=\int_\Sigma\phi L\phi\hspace{.01cm}dv-\int_{\p\Sigma}\phi\left(\frac{\p\phi}{\p\nu}-\II^{\p M}(N,N)\phi\right)ds,
\end{align}
where $\phi=\langle\mathcal V,N\rangle$ (see \cite[Section 5]{AlaeeLesourdYau}). The corresponding eigenvalue problem to functional \eqref{eq.F.functional} is the following:

\begin{align*}
L\phi&=-\Delta\phi+2\langle X,\nabla\phi\rangle+(Q-|X|^2+\div X)\phi=\lambda\phi\,\,\,\,\mbox{on}\,\,\,\,\Sigma,\\
B\phi&\triangleq\frac{\p\phi}{\p\nu}-\II^{\p M}(N,N)\phi=0\,\,\,\,\mbox{along}\,\,\,\,\p\Sigma.
\end{align*}

Then a compact free boundary MOTS $\Sigma$ in $(M,g,K)$ is said to be \sl{stable} provided there exists a nonnegative function $\phi\in C^\infty(\Sigma)$, $\phi\not\equiv0$, satisfying the Robin boundary condition $B\phi=0$ such that $L\phi\ge0$. Without loss of generality, by the maximum principle for nonpositive functions, we may assume that $\phi>0$.

Before finishing this section, let us present some important definitions. 

Suppose $\Sigma$ is a separating MOTS in $(M,g,K)$. By definition, the unit normal $N$ is said to be \sl{outward-pointing}. We say that $\Sigma$ is \sl{weakly outermost} if there is no outer trapped ($\theta^+<0$) surface outside of, and homologous to, $\Sigma$. Analogously, we say that $\Sigma$ is \sl{outermost} if there is no weakly outer trapped ($\theta^+\le0$) surface outside of, and homologous to, $\Sigma$. 

\section{Auxiliary Results}\label{section.3}

The first auxiliary result we are going to prove is the following.

\begin{lemma}\label{lemma.8}
Let $(M,g,K)$ be an $(n+1)$-dimensional, $n\ge2$, initial data set with boundary and $\Sigma$ be a compact free boundary MOTS in $(M,g,K)$. If $\Sigma$ is stable, then the first eigenvalue $\lambda_1(\L_0)$ of $\L_0\triangleq-\Delta+Q$ on $\Sigma$ with Robin boundary condition $B_0u\triangleq Bu+\langle X,\nu\rangle u=0$ is nonnegative.
\end{lemma}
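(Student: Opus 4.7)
\medskip

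\noindent\textbf{Proof plan.} The plan is to adapt the classical Galloway--Schoen symmetrization trick for the MOTS stability operator (which replaces the non-self-adjoint operator $L$ with the self-adjoint $\L_0=-\Delta+Q$) to the free boundary setting, paying careful attention to how the Robin boundary condition $B\phi=0$ transforms under the symmetrization and produces the shifted condition $B_0u=0$.

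First, I would use the stability hypothesis to pick a smooth function $\phi>0$ on $\Sigma$ with $L\phi\ge0$ on $\Sigma$ and $B\phi=0$ on $\p\Sigma$, i.e.\ $\p\phi/\p\nu=\II^{\p M}(N,N)\phi$. Setting $u=\ln\phi$ and using $\Delta\phi/\phi=\Delta u+|\nabla u|^2$, the inequality $L\phi/\phi\ge0$ becomes the pointwise relation
\begin{equation*}
Q+\div X\ge\Delta u+|\nabla u-X|^2\quad\text{on }\Sigma,
\end{equation*}
while the boundary condition reads $\p u/\p\nu=\II^{\p M}(N,N)$ on $\p\Sigma$.

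Next, I would test this inequality against an arbitrary $\psi\in C^\infty(\Sigma)$. Multiplying by $\psi^2$, integrating over $\Sigma$, and integrating by parts the two divergence terms produces
\begin{align*}
\int_\Sigma\psi^2\Delta u\,dv&=-2\int_\Sigma\psi\langle\nabla\psi,\nabla u\rangle\,dv+\int_{\p\Sigma}\psi^2\II^{\p M}(N,N)\,ds,\\
\int_\Sigma\psi^2\div X\,dv&=-2\int_\Sigma\psi\langle\nabla\psi,X\rangle\,dv+\int_{\p\Sigma}\psi^2\langle X,\nu\rangle\,ds,
\end{align*}
where the first identity uses the Robin condition on $\phi$ to rewrite $\p u/\p\nu$. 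Combining and rearranging, the cross-terms pair up as $-2\psi\langle\nabla\psi,\nabla u-X\rangle$ and the two boundary contributions combine into exactly $\int_{\p\Sigma}\psi^2(\langle X,\nu\rangle-\II^{\p M}(N,N))\,ds$, which is the boundary term naturally associated with the Robin condition $B_0\psi=0$. Applying Cauchy--Schwarz in the form $-2\psi\langle\nabla\psi,\nabla u-X\rangle\ge-|\nabla\psi|^2-\psi^2|\nabla u-X|^2$ kills the unpleasant $|\nabla u-X|^2$ term and leaves
\begin{equation*}
\int_\Sigma(|\nabla\psi|^2+Q\psi^2)\,dv+\int_{\p\Sigma}\bigl(\langle X,\nu\rangle-\II^{\p M}(N,N)\bigr)\psi^2\,ds\ge0.
\end{equation*}

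Finally, I would recognize the left-hand side as the Rayleigh quotient associated to the symmetric eigenvalue problem $\L_0\psi=\lambda\psi$ with the Robin condition $B_0\psi=0$ (since $B_0\psi=0$ is precisely $\p\psi/\p\nu=\II^{\p M}(N,N)\psi-\langle X,\nu\rangle\psi$, so the relevant boundary integrand in the quadratic form is $(\langle X,\nu\rangle-\II^{\p M}(N,N))\psi^2$). The min-max characterization of $\lambda_1(\L_0)$ then yields $\lambda_1(\L_0)\ge0$. The only subtle step is bookkeeping the boundary terms so that the bilinear form one ends up with is exactly the one whose principal eigenvalue is $\lambda_1(\L_0)$; the interior part is identical to the closed case treated in \cite{GallowaySchoen}.
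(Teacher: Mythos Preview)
Your proposal is correct and follows essentially the same approach as the paper: both use the Galloway--Schoen symmetrization identity $L\phi/\phi=Q+\div Y-|Y|^2$ with $Y=X-\nabla\ln\phi$, multiply by the square of a test function, and apply Young's inequality to absorb the $|Y|^2$ term, with the boundary condition $B\phi=0$ producing exactly the shifted Robin term $\langle X,\nu\rangle-\II^{\p M}(N,N)$. The only cosmetic difference is that the paper plugs in the first eigenfunction of $\L_0$ directly to read off $\lambda_1(\L_0)\ge0$, whereas you establish nonnegativity of the quadratic form for arbitrary $\psi$ and then invoke the variational characterization; these are equivalent.
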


The proof of Lemma \ref{lemma.8} is essentially contained in the proof of Proposition 5.4 in \cite{AlaeeLesourdYau}. But, for the sake of completeness, we include it here. The inspiring closed case was first proved by Galloway and Schoen \cite{GallowaySchoen} (see also \cite[Section 4]{AnderssonMarsSimon} and \cite[Lemma 2.2]{Galloway2008}).

\begin{proof}[Proof of Lemma \ref{lemma.8}]
Let $\phi\in C^\infty(\Sigma)$ be a positive function satisfying the boundary condition $B\phi=0$ such that $L\phi\ge0$. Observe that  
\begin{align*}
\frac{L\phi}{\phi}&=-\frac{\Delta\phi}{\phi}+2\langle X,\nabla\ln\phi\rangle+Q-|X|^2+\div X\\
&=-\frac{\Delta\phi}{\phi}+|\nabla\ln\phi|^2-|X-\nabla\ln\phi|^2+Q+\div X\\
&=\div Y-|Y|^2+Q,
\end{align*}
where $Y=X-\nabla\ln\phi$. Therefore
\begin{align*}
u^2\frac{L\phi}{\phi}&=\div(u^2Y)-2u\langle Y,\nabla u\rangle-|uY|^2+Qu^2\\
&=\div(u^2Y)-|uY+\nabla u|^2+|\nabla u|^2+Qu^2\\
&\le\div(u^2Y)+|\nabla u|^2+Qu^2,
\end{align*}
for any function $u\in C^\infty(\Sigma)$.

Then, taking $u$ as a first eigenfunction of $\L_0$, $\L_0u=\lambda_1(\L_0)u$, with Robin boundary condition $B_0u=0$ and using that $L\phi\ge0$, we obtain 
\begin{align*}
0&\le\int_\Sigma\left(|\nabla u|^2+Qu^2+\div(u^2Y)\right)dv\\
&=\int_\Sigma\left(|\nabla u|^2+Qu^2\right)dv-\int_{\p\Sigma}u^2\left(\frac{1}{\phi}\frac{\p\phi}{\p\nu}-\langle X,\nu\rangle\right)ds\\
&=\int_\Sigma\left(|\nabla u|^2+Qu^2\right)dv-\int_{\p\Sigma}u^2\left(\II^{\p M}(N,N)-\langle X,\nu\rangle\right)ds\\
&=\int_\Sigma\left(|\nabla u|^2+Qu^2\right)dv-\int_{\p\Sigma}u\frac{\p u}{\p\nu}ds\\
&=\int_\Sigma u\L_0u\hspace{.01cm}dv=\lambda_1(\L_0)\int_\Sigma u^2dv,
\end{align*}
where above we have used that $B\phi=0$ and $B_0u=0$. Thus $\lambda_1(\L_0)\ge0$.
\end{proof}

Our second auxiliary result is the following.

\begin{lemma}\label{lemma.9}
Let $(\Sigma,\gamma)$ be an $n$-dimensional, $n\ge2$, compact-with-boundary, connected, orientable, Riemannian manifold. Suppose there exists $u\in C^\infty(\Sigma)$, $u>0$, such that
\begin{align*}
&\L u\triangleq-\Delta u+\left(\frac{1}{2}R^\Sigma-P\right)u\ge0\,\,\,\,\mbox{on}\,\,\,\,\Sigma,\\
&\frac{\p u}{\p\nu}+H^{\p\Sigma}u\ge0\,\,\,\,\mbox{along}\,\,\,\,\p\Sigma,
\end{align*}
where $R^\Sigma$ is the scalar curvature of $(\Sigma,\gamma)$, $H^{\p\Sigma}$ is the mean curvature of $\p\Sigma$ in $(\Sigma,\gamma)$, and $P$ is a nonnegative function on $\Sigma$. Then either:
\begin{enumerate}
\item $\Sigma$ admits a metric of positive scalar curvature with minimal boundary or
\item $(\Sigma,\gamma)$ is Ricci flat with totally geodesic boundary, $P=0$, and $u$ is constant.
\end{enumerate}
\end{lemma}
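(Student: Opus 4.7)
My plan is to follow the template of \cite{Galloway2018} for the closed case, adapted to the free boundary setting by systematically replacing Dirichlet/Neumann data with the Robin operator $Bu=\p u/\p\nu+H^{\p\Sigma}u$. The argument splits into three movements: (i) an eigenvalue-positivity step that turns $u$ into a bona fide supersolution; (ii) a conformal reduction to the boundary Yamabe operator (or Gauss-Bonnet when $n=2$), producing either a metric of positive scalar curvature with minimal boundary or a scalar/mean-curvature-flat rigidity; and (iii) an upgrade from that rigidity to Ricci-flatness with totally geodesic boundary.

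First I would show that the principal eigenvalue $\lambda_1$ of $\L\triangleq-\Delta+(\tfrac12R^\Sigma-P)$ under the Robin condition $Bu=0$ is nonnegative. The argument mirrors the Green-identity proof of Lemma~\ref{lemma.8}: if $w>0$ realizes $\lambda_1<0$ with $Bw=0$, then the boundary integral $\int_{\p\Sigma}(w\p_\nu u-u\p_\nu w)\,ds$ controls $\int_\Sigma(w\L u-u\L w)\,dv$, and $Bu\ge 0$ together with $Bw=0$ forces a sign contradiction because $\int u\L w=\lambda_1\int uw<0$ while $\int w\L u\ge 0$.

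Assuming $n\ge 3$, the plan is to set $v=u^{(n-2)/(2(n-1))}$ and verify by a direct calculation using $\L u\ge 0$ that
\[
-\tfrac{4(n-1)}{n-2}\Delta v + R^\Sigma v \ \ge\ 2Pv+\tfrac{n}{n-1}u^{\alpha-2}|\nabla u|^2 \quad\text{on }\Sigma,\qquad \tfrac{2(n-1)}{n-2}\tfrac{\p v}{\p\nu}+H^{\p\Sigma}v \ =\ u^{\alpha-1}Bu \ \ge\ 0 \quad\text{on }\p\Sigma,
\]
where the left-hand sides are the conformal Laplacian $L_Y$ and conformal Robin operator $B_Y$ of the boundary Yamabe problem. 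The same supersolution argument now gives $\lambda_1^Y(L_Y,B_Y)\ge 0$. If $\lambda_1^Y>0$, the first eigenfunction $\phi>0$ produces a conformal metric $\tilde\gamma=\phi^{4/(n-2)}\gamma$ with $\tilde R>0$ and $\tilde H^{\p\Sigma}=0$, which is conclusion~(1). If $\lambda_1^Y=0$, a Green identity comparing $v$ with $\phi$ forces $L_Yv\equiv 0$ and $B_Yv\equiv 0$ pointwise; in view of the displayed inequalities this forces $P\equiv 0$, $\nabla u\equiv 0$ (so $u$ is constant), $Bu\equiv 0$ (so $H^{\p\Sigma}\equiv 0$), and $R^\Sigma\equiv 0$.

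It remains to upgrade this scalar/mean-curvature-flat rigidity to conclusion~(2). I would argue by contradiction: if $\mathrm{Ric}^\Sigma\not\equiv 0$ or $\II^{\p\Sigma}\not\equiv 0$, a Bourguignon-style deformation $\gamma_s=\gamma-s\,\mathrm{Ric}^\Sigma$ (suitably cut off near $\p\Sigma$ so as not to degrade $H^{\p\Sigma}$) gives $R^\Sigma_s\ge 0$ with strict positivity somewhere to first order in $s$; a conformal correction along the lines of Carlotto-Li \cite{CarlottoLi2019} then makes $R^\Sigma$ strictly positive everywhere while restoring $H^{\p\Sigma}=0$, contradicting the hypothesis that $\Sigma$ admits no psc metric with minimal boundary. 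Finally, the $n=2$ case collapses to Gauss-Bonnet: dividing $\L u\ge 0$ by $u$, integrating, and applying the Robin inequality yields $2\pi\chi(\Sigma)\ge\int_\Sigma P\ge 0$, so either $\chi(\Sigma)>0$ (i.e.\ $\Sigma$ is a disk, on which a hemisphere metric provides conclusion~(1)) or equality forces $P=0$, $K^\Sigma=0$, $H^{\p\Sigma}=0$, and $u$ constant, with Ricci-flatness and totally geodesic boundary automatic in two dimensions. The main obstacle is precisely this last rigidity upgrade: simultaneously improving $R^\Sigma$ and keeping (or restoring) $H^{\p\Sigma}=0$ through the deformation is what distinguishes the free boundary case from the closed one and is the essential technical input borrowed from \cite{CarlottoLi2019}.
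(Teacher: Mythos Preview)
Your proposal is correct and follows essentially the same route as the paper: a conformal rescaling by a power of $u$ to obtain nonnegative scalar curvature and boundary mean curvature, the boundary Yamabe eigenvalue dichotomy for $n\ge 3$ (Gauss--Bonnet for $n=2$), and the Carlotto--Li deformation to upgrade the scalar-flat/minimal-boundary rigidity to Ricci-flat with totally geodesic boundary. The only differences are cosmetic---the paper passes to $\hat\gamma=u^{2/(n-1)}\gamma$ and runs the Yamabe argument there, whereas you stay on $\gamma$ and treat $v=u^{(n-2)/(2(n-1))}$ as a supersolution of $(L_Y,B_Y)$ (these are equivalent since $v^{4/(n-2)}\gamma=\hat\gamma$); also, your Step~(i) showing $\lambda_1(\L)\ge 0$ is never used downstream and can be omitted.
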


\begin{proof}
Suppose $\Sigma$ does not admit a metric of positive scalar curvature with minimal boundary. Consider the metric $\hat\gamma=u^{2/(n-1)}\gamma$. The scalar curvature $R_{\hat\gamma}^\Sigma$ of $(\Sigma,\hat\gamma)$ and the mean curvature $H_{\hat\gamma}^{\p\Sigma}$ of $\p\Sigma$ in $(\Sigma,\hat\gamma)$ are given by
\begin{align}
R_{\hat\gamma}^\Sigma&=u^{-\frac{n+1}{n-1}}\left(-2\Delta u+R^\Sigma u+\frac{n}{n-1}\frac{|\nabla u|^2}{u}\right)=u^{-\frac{n+1}{n-1}}\left(2(\L u+Pu)+\frac{n}{n-1}\frac{|\nabla u|^2}{u}\right),\label{eq.4}\\
H_{\hat\gamma}^{\p\Sigma}&=u^{-\frac{n}{n-1}}\left(\frac{\p u}{\p\nu}+H^{\p\Sigma}u\right).\label{eq.5}
\end{align}
In particular, $R_{\hat\gamma}^\Sigma$ and $H_{\hat\gamma}^{\p\Sigma}$ are nonnegative. 

If $n=2$, by the Gauss-Bonnet theorem, $\chi(\Sigma)\ge0$. On the other hand, in dimension $n=2$, saying that $\Sigma$ does not admit a metric of positive scalar curvature (positive Gaussian curvature) with minimal boundary (geodesic boundary) is equivalent to saying that $\chi(\Sigma)\le0$ (see \cite[Theorem 1.2]{CruzVitorio}). Therefore $\chi(\Sigma)=0$ and, by the Gauss-Bonnet theorem, $R_{\hat\gamma}^\Sigma$ and $H_{\hat\gamma}^{\p\Sigma}$ vanish, since they are nonnegative. From \eqref{eq.4} and \eqref{eq.5}, we obtain that $u$ is constant and the functions $P$, $R^\Sigma$, and $H^{\p\Sigma}$ vanish. This finishes the proof in case $n=2$.

Assume that $n\ge3$.

{\bf Claim 1.} $R_{\hat\gamma}^\Sigma$ and $H_{\hat\gamma}^{\p\Sigma}$ vanish. If not, consider the eigenvalue problem
\begin{align}
-c_n\Delta_{\hat\gamma}\psi+R_{\hat\gamma}^\Sigma\psi&=\lambda\psi\,\,\,\,\mbox{on}\,\,\,\,\Sigma,\label{eq.6}\\
c_n\frac{\p\psi}{\p\nu_{\hat\gamma}}+2H_{\hat\gamma}^{\p\Sigma}\psi&=0\,\,\,\,\mbox{along}\,\,\,\,\p\Sigma,\nonumber
\end{align}
where $c_n=\frac{4(n-1)}{n-2}$, and let $\psi_1$ be an eigenfunction associated with the first eigenvalue $\lambda_1$ for this problem. We may assume that $\psi_1>0$. Multiplying \eqref{eq.6} by $\psi=\psi_1$ and integrating over $(\Sigma,\hat\gamma)$, we obtain
\begin{align*}
\lambda_1=\frac{\displaystyle\int_\Sigma\left(c_n|\nabla\psi_1|_{\hat\gamma}^2+R_{\hat\gamma}^\Sigma\psi_1^2\right)dv_{\hat\gamma}+2\int_{\p\Sigma}H_{\hat\gamma}^{\p\Sigma}\psi_1^2ds_{\hat\gamma}}{\displaystyle\int_\Sigma\psi_1^2dv_{\hat\gamma}}.
\end{align*}
Since $R_{\hat\gamma}^\Sigma$ and $H_{\hat\gamma}^{\p\Sigma}$ are nonnegative and, by hypothesis, at least one of them is not identically zero, we have $\lambda_1>0$. Therefore the scalar curvature of $(\Sigma,\gamma_1=\psi_1^{4/(n-2)}\hat\gamma)$ and the mean curvature of $\p\Sigma$ in $(\Sigma,\gamma_1)$ satisfy
\begin{align*}
R_{\gamma_1}^\Sigma&=\psi_1^{-\frac{n+2}{n-2}}(-c_n\Delta_{\hat\gamma}\psi_1+R_{\hat\gamma}^\Sigma\psi_1)=\psi_1^{-\frac{4}{n-2}}\lambda_1>0,\\
H_{\gamma_1}^{\p\Sigma}&=\frac{1}{2}\psi_1^{-\frac{n}{n-2}}\left(c_n\frac{\p\psi_1}{\p\nu}+2H_{\hat\gamma}^{\p\Sigma}\psi_1\right)=0,
\end{align*}
which is a contradiction, because we are assuming that $\Sigma$ does not admit a metric of positive scalar curvature with minimal boundary. Thus $R_{\hat\gamma}^\Sigma$ and $H_{\hat\gamma}^{\p\Sigma}$ vanish.

Using $R_{\hat\gamma}^\Sigma=0$ and $H_{\hat\gamma}^{\p\Sigma}=0$ into \eqref{eq.4} and \eqref{eq.5}, we obtain that $u$ is constant and the functions $P$, $R^\Sigma$, and $H^{\p\Sigma}$ vanish. Then we claim that $(\Sigma,\gamma)$ is Ricci flat with totally geodesic boundary. If not, it follows from \cite[Lemma 2.2]{CarlottoLi2021} that $\Sigma$ admits a metric of positive scalar curvature with mean convex boundary. In this case, acting exactly as in the proof of Claim 1, we get a contradiction. This finishes the proof.
\end{proof}

Now, we use Lemmas \ref{lemma.8} and \ref{lemma.9} to obtain an \sl{infinitesimal} rigidity result.

\begin{proposition}\label{prop.infinitesimal}
Let $(M,g,K)$ be an $(n+1)$-dimensional, $n\ge2$, initial data set with boundary and $\Sigma$ be a compact free boundary stable MOTS in $(M,g,K)$. Suppose $(M,g,K)$ satisfies the interior and the boundary DEC. If $\Sigma$ does not admit a metric of positive scalar curvature, then 
\begin{enumerate}
\item $\Sigma$ has vanishing outward null second fundamental form, i.e. $\chi^+=0$.
\item $\Sigma$ is Ricci flat and has totally geodesic boundary with respect to the induced metric.
\item The interior DEC saturates on $\Sigma$ and $J|_\Sigma=0$.
\item The boundary DEC saturates along $\p\Sigma$ and $(\iota_\varrho\pi)^\top|_{\p\Sigma}=0$.
\end{enumerate}
\end{proposition}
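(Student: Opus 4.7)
The plan is to deduce this infinitesimal rigidity by feeding the positive Robin eigenfunction produced by Lemma~\ref{lemma.8} into the conformal-rigidity dichotomy of Lemma~\ref{lemma.9}. Since $\Sigma$ is a stable free boundary MOTS, Lemma~\ref{lemma.8} gives $\lambda_1(\L_0)\ge 0$; I pick a positive first eigenfunction $u\in C^\infty(\Sigma)$ satisfying $\L_0 u=\lambda_1 u\ge 0$ on $\Sigma$ and $B_0 u=0$ on $\p\Sigma$. Unpacking $Q$, this is precisely the interior inequality
\[
-\Delta u+\bigl(\tfrac{1}{2}R^\Sigma-P\bigr)u\ge 0,\qquad P:=(\mu+J(N))+\tfrac{1}{2}|\chi^+|^2,
\]
required by Lemma~\ref{lemma.9}, and $P\ge 0$ because the interior DEC gives $\mu+J(N)\ge\mu-|J|\ge 0$.

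The key check is that $u$ also satisfies the boundary hypothesis $\p u/\p\nu+H^{\p\Sigma}u\ge 0$ of Lemma~\ref{lemma.9}. Using the free boundary identity $\II^{\p M}(N,N)=H^{\p M}-H^{\p\Sigma}$ recorded in the remark, the Robin condition $B_0 u=0$ rearranges to
\[
\frac{\p u}{\p\nu}+H^{\p\Sigma}u=\bigl(H^{\p M}-\langle X,\nu\rangle\bigr)u.
\]
Along $\p\Sigma$ one has $\nu=\varrho$ with $N$ tangent to $\p M$ and $N\perp\nu$, so $\langle X,\nu\rangle=K(N,\varrho)=\pi(\varrho,N)=(\iota_\varrho\pi)(N)$. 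The Cauchy--Schwarz bound $|(\iota_\varrho\pi)(N)|\le|(\iota_\varrho\pi)^\top|$ combined with the boundary DEC yields the chain
\[
H^{\p M}\ge|(\iota_\varrho\pi)^\top|\ge|(\iota_\varrho\pi)(N)|\ge(\iota_\varrho\pi)(N),
\]
delivering the required nonnegativity. I expect this boundary bookkeeping to be the trickiest step, as it is the one place where stability, the free boundary condition, and the boundary DEC must all mesh.

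With both hypotheses of Lemma~\ref{lemma.9} verified, the non-positive Yamabe hypothesis on $\Sigma$ forces alternative~(2): $(\Sigma,\gamma)$ is Ricci flat with totally geodesic boundary (item~2), $u$ is constant, and $P\equiv 0$. From $P=0$ and the signs of its summands I read off $\chi^+=0$ (item~1) and $\mu+J(N)=0$; the latter combined with $\mu\ge|J|$ forces $\mu=|J|$ (the interior DEC saturates) and $J(N)=-|J|$, and the Cauchy--Schwarz equality case with $|N|=1$ shows $J^\sharp$ is antiparallel to $N$, hence $J|_\Sigma=0$, giving item~3.

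For item~4, plugging $u\equiv\mathrm{const}>0$ and $H^{\p\Sigma}=0$ into the boundary identity above yields $H^{\p M}=(\iota_\varrho\pi)(N)$ along $\p\Sigma$. Forcing equality throughout the three-term chain then gives $H^{\p M}=|(\iota_\varrho\pi)^\top|$, so the boundary DEC saturates; moreover the equality $|(\iota_\varrho\pi)^\top|=|(\iota_\varrho\pi)(N)|$ together with $|N|=1$ and the orthogonal decomposition $T(\p M)|_{\p\Sigma}=T(\p\Sigma)\oplus\R N$ forces $(\iota_\varrho\pi)^\top|_{\p\Sigma}=0$.
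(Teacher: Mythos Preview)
Your proof is correct and follows essentially the same approach as the paper: you feed the positive first Robin eigenfunction from Lemma~\ref{lemma.8} into Lemma~\ref{lemma.9}, verifying the boundary hypothesis via the free boundary identity $H^{\p M}=H^{\p\Sigma}+\II^{\p M}(N,N)$, the identification $\langle X,\nu\rangle=(\iota_\varrho\pi)^\top(N)$, and the boundary DEC, and then read off all four conclusions from alternative~(2) exactly as the paper does. The only cosmetic difference is that where the paper writes the orthogonal decompositions $|J|^2=|J|_\Sigma|^2+J(N)^2$ and $|(\iota_\varrho\pi)^\top|^2=|(\iota_\varrho\pi)^\top|_{\p\Sigma}|^2+(\iota_\varrho\pi)^\top(N)^2$ explicitly, you phrase the same step as the Cauchy--Schwarz equality case.
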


\begin{proof}
Let $u>0$ be an eigenfunction of $\L_0=-\Delta+Q$ on $\Sigma$, with Robin boundary condition $B_0u=0$, associated with the first eigenvalue $\lambda_1(\L_0)$, that is,
\begin{align*}
\L_0u&=-\Delta u+\left(\frac{1}{2}R^\Sigma-P\right)u=\lambda_1(\L_0)u\,\,\,\,\mbox{on}\,\,\,\,\Sigma,\\
\frac{\p u}{\p\nu}&=\left(\II^{\p M}(N,N)-\langle X,\nu\rangle\right)u\,\,\,\,\mbox{along}\,\,\,\,\p\Sigma,
\end{align*}
where 
\begin{align*}
P\triangleq\mu+J(N)+\frac{1}{2}|\chi^+|^2.
\end{align*}

In order to use Lemma \ref{lemma.9}, we need to check that $\L_0u$, $P$, and $\frac{\p u}{\p\nu}+H^{\p\Sigma}u$ are nonnegative. The first condition follows from Lemma \ref{lemma.8}, since $\Sigma$ is stable. The second one follows from the interior DEC. In fact,
\begin{align}\label{eq.7}
P\ge\mu+J(N)\ge\mu-|J|\ge0.
\end{align}
For the third, observe that 
\begin{align*}
H^{\p M}=H^{\p\Sigma}+\II(N,N)
\end{align*}
and 
\begin{align*}
(\iota_{\varrho}\pi)^\top(N)=K(\varrho,N)-(\tr K)\langle\varrho,N\rangle=K(\nu,N)=\langle X,\nu\rangle,
\end{align*}
since $\Sigma$ is free boundary. Therefore, using the boundary DEC, we obtain
\begin{align}
\frac{\p u}{\p\nu}+H^{\p\Sigma}u&=\left(H^{\p\Sigma}+\II^{\p M}(N,N)-\langle X,\nu\rangle\right)u\nonumber\\
&=\left(H^{\p M}-(\iota_{\varrho}\pi)^\top(N)\right)u\nonumber\\
&\ge\left(|(\iota_{\varrho}\pi)^\top|-(\iota_{\varrho}\pi)^\top(N)\right)u\nonumber\\
&\ge0.\label{eq.8}
\end{align}
Then, from Lemma \ref{lemma.9}, we have:
\begin{itemize}
\item $\Sigma$ is Ricci flat and has totally geodesic boundary with respect to the induced metric.
\item $P=0$. In particular, from \eqref{eq.7}, we obtain: $\chi^+=0$, $\mu=|J|$, and $J(N)=-|J|$. Also, observing that $|J|^2=|J|_\Sigma|^2+J(N)^2$, we get $J|_\Sigma=0$.
\item $u$ is constant. In this case, using that $\Sigma$ has totally geodesic boundary and, in particular, $H^{\p\Sigma}=0$, inequality \eqref{eq.8} must saturate, that is, $|(\iota_{\varrho}\pi)^\top|=(\iota_{\varrho}\pi)^\top(N)$. Finally, $|(\iota_{\varrho}\pi)^\top|^2=|(\iota_{\varrho}\pi)^\top|_{\p\Sigma}|^2+(\iota_{\varrho}\pi)^\top(N)^2$ gives $(\iota_{\varrho}\pi)^\top|_{\p\Sigma}=0$.
\end{itemize}
This finishes the proof.
\end{proof}

Consider the operator 
\begin{align*}
L^*\psi\triangleq-\Delta\psi-2\langle X,\nabla\psi\rangle+(Q-|X|^2-\div X)\psi,\,\,\,\,\psi\in C^\infty(\Sigma),
\end{align*}
called the \sl{formal adjoint} of $L$. Straightforward computations show that 
\begin{align*}
\int_\Sigma\psi L\phi\hspace{.01cm}dv+\int_{\p\Sigma}\psi B\phi\hspace{.01cm}ds=\int_\Sigma\phi L^*\psi\hspace{.01cm}dv+\int_{\p\Sigma}\phi B^*\psi\hspace{.01cm}ds
\end{align*}
for every $\phi,\psi\in C^\infty(\Sigma)$, where
\begin{align*}
B^*\psi\triangleq\frac{\p\psi}{\p\nu}-\left(\II^{\p M}(N,N)-2\langle X,\nu\rangle\right)\psi.
\end{align*}

\begin{remark}
It is important to note that, because $L$ is not necessarily symmetric and $\Sigma$ is a manifold with boundary, there is no \sl{standard} general existence result for the principal eigenvalue $\lambda_1(L)$ of $L$ (with Robin boundary condition $B\phi=0$) as in the closed case. Then, in order to guarantee the existence of $\lambda_1(L)$ with similar properties to those of the closed case, Alaee, Lesourd, and Yau \cite{AlaeeLesourdYau} assumed that $\II^{\p M}(N,N)\le0$ along $\p\Sigma$. But here, we succeed in circumventing this technical issue and do not make such an assumption.
\end{remark}

\begin{lemma}\label{lemma.formal.adjoint}
Under the assumptions of Proposition \ref{prop.infinitesimal}, zero is a simple eigenvalue of $L$ on $\Sigma$ with Robin boundary condition $B\phi=0$ whose associated eigenfunctions can be chosen to be positive. The same holds for the formal adjoint operator $L^*$ on $\Sigma$ with Robin boundary condition $B^*\phi^*=0$.
\end{lemma}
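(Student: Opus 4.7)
The plan is to combine a gauge reduction of the Robin boundary condition to a Neumann one, the classical Krein--Rutman theory in the Neumann setting, and a Bochner-type comparison with the self-adjoint operator $\L_0$ used in Lemma~\ref{lemma.8}.

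First, I would establish existence of the principal eigenvalue $\lambda_1(L)$ with Perron-type properties. Pick any $f\in C^\infty(\Sigma)$ with $\p f/\p\nu = \II^{\p M}(N,N)$ on $\p\Sigma$ (obtained by smoothly extending the boundary datum to $\Sigma$), and substitute $\phi = e^f u$. A direct computation shows that $\tilde L u := e^{-f}L(e^f u)$ is again of the form $-\Delta u + 2\langle\tilde X,\nabla u\rangle + \tilde V u$, while the Robin condition $B\phi = 0$ becomes the pure Neumann condition $\p u/\p\nu = 0$. The Neumann eigenvalue problem for $\tilde L$ sits squarely inside the classical Krein--Rutman / Berestycki--Nirenberg--Varadhan framework, which supplies a real simple principal eigenvalue $\lambda_1(L):=\lambda_1(\tilde L)$ with a positive eigenfunction $\phi_1 = e^f u_1$, satisfying $\real\lambda \ge \lambda_1(L)$ for every eigenvalue $\lambda$, and the variational characterization
\[
\lambda_1(L) = \sup\{\lambda\in\R : \exists\,\phi>0 \text{ with } L\phi\ge\lambda\phi \text{ on }\Sigma \text{ and } B\phi = 0 \text{ on }\p\Sigma\}.
\]

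Next, I would pin down $\lambda_1(L) = 0$ by sandwiching. Stability of $\Sigma$ furnishes a positive $\phi$ with $L\phi \ge 0$ and $B\phi = 0$, so the above characterization immediately yields $\lambda_1(L) \ge 0$. For the reverse bound I would repeat the Bochner-type computation from the proof of Lemma~\ref{lemma.8}, now applied to $\phi = \phi_1$: the identity
\[
u^2\frac{L\phi_1}{\phi_1} = \div(u^2 Y) - |uY + \nabla u|^2 + |\nabla u|^2 + Qu^2,\qquad Y = X - \nabla\ln\phi_1,
\]
integrated together with $B\phi_1 = 0$, gives
\[
\lambda_1(L)\int_\Sigma u^2\,dv \le \int_\Sigma(|\nabla u|^2 + Qu^2)\,dv - \int_{\p\Sigma}(\II^{\p M}(N,N) - \langle X,\nu\rangle)\,u^2\,ds
\]
for every $u\in C^\infty(\Sigma)$. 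The right-hand side is exactly the quadratic form of the self-adjoint Robin eigenvalue problem for $\L_0$ with condition $B_0 u = 0$; its infimum over $u$ with $\int u^2 = 1$ is $\lambda_1(\L_0)$. Hence $\lambda_1(L)\le\lambda_1(\L_0)$. But the proof of Proposition~\ref{prop.infinitesimal} already shows that the principal $\L_0$-eigenfunction is constant and $Q\equiv 0$, so $\lambda_1(\L_0)=0$. Combining the two bounds, $\lambda_1(L)=0$, and $\phi_1>0$ is the desired positive eigenfunction; simplicity of the eigenvalue $0$ is part of the Krein--Rutman conclusion.

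For the formal adjoint I would argue in parallel, choosing $g\in C^\infty(\Sigma)$ with $\p g/\p\nu = \II^{\p M}(N,N) - 2\langle X,\nu\rangle$ and substituting $\psi = e^g v$ to reduce $B^*\psi = 0$ to a Neumann condition. Krein--Rutman then supplies a simple principal eigenvalue $\lambda_1(L^*)$ with a positive eigenfunction $\psi_1$. Pairing $\psi_1$ with $\phi_1$ via the adjoint identity stated just before the lemma gives
\[
\lambda_1(L^*)\int_\Sigma\phi_1\psi_1\,dv = \int_\Sigma\phi_1 L^*\psi_1\,dv = \int_\Sigma\psi_1 L\phi_1\,dv = 0,
\]
which forces $\lambda_1(L^*)=0$ since $\int_\Sigma\phi_1\psi_1>0$. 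The main technical obstacle is precisely the first step: because $\II^{\p M}(N,N)$ has no a priori sign, the Robin condition $B\phi = 0$ does not plug directly into textbook formulations of Krein--Rutman, and the gauge reduction to a Neumann problem is what lets us bypass the auxiliary hypothesis $\II^{\p M}(N,N)\le 0$ imposed in \cite{AlaeeLesourdYau}.
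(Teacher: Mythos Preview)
Your argument is correct, but it takes a substantially different route from the paper's. You build a general principal-eigenvalue theory for the Robin problem via a gauge substitution $\phi=e^f u$ (turning Robin into Neumann so that Krein--Rutman applies), and then pin down $\lambda_1(L)=0$ by the sandwich $0\le\lambda_1(L)\le\lambda_1(\L_0)=0$; the adjoint case follows from the duality pairing. The paper instead exploits the specific rigidity output of Proposition~\ref{prop.infinitesimal}: since $Q\equiv0$ and the first $\L_0$-eigenfunction is constant, one has $\II^{\p M}(N,N)=\langle X,\nu\rangle$ along $\p\Sigma$. Feeding this into the identity $L\phi/\phi=-|Y|^2+\div Y$ with $Y=X-\nabla\ln\phi$ and integrating gives $X=\nabla\ln\phi$, hence $L\phi=0$ directly for the very $\phi>0$ supplied by stability. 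Simplicity is then a two-line computation ($u=\psi/\phi$ is Neumann-harmonic), and the adjoint eigenfunction is written down explicitly as $\phi^*=c/\phi$ after observing $L^*\phi^*=-\phi^{-1}\Delta(\phi^*\phi)$ and $B^*\phi^*=\phi^{-1}\p_\nu(\phi^*\phi)$. So the paper never invokes Krein--Rutman or a gauge reduction at all.

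What each approach buys: the paper's argument is completely elementary and self-contained, and as a by-product yields the structural identity $X=\nabla\ln\phi$ together with the explicit adjoint eigenfunction $1/\phi$. Your approach is more portable---the gauge trick reducing Robin to Neumann is a clean, reusable device for handling the sign-indefinite coefficient $\II^{\p M}(N,N)$---but it imports heavier machinery and does not produce the explicit formula for $\phi^*$.
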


\begin{proof}
From Proposition \ref{prop.infinitesimal} (and its proof), we know that $Q=0$ and $\II^{\p M}(N,N)=\langle X,\nu\rangle$. Since $\Sigma$ is stable, we can take $\phi\in C^\infty(\Sigma)$, $\phi>0$, satisfying $L\phi\ge0$ on $\Sigma$ and $B\phi=0$ along~$\p\Sigma$. Then
\begin{align*}
0\le\frac{L\phi}{\phi}=-|X-\nabla\ln\phi|^2+\div(X-\nabla\ln\phi)
\end{align*}
provides, via the divergence theorem,
\begin{align*}
\int_\Sigma|X-\nabla\ln\phi|^2dv\le\int_{\p\Sigma}\left(\langle X,\nu\rangle-\frac{1}{\phi}\frac{\p\phi}{\p\nu}\right)ds=\int_{\p\Sigma}\left(\langle X,\nu\rangle-\II^{\p M}(N,N)\right)ds=0.
\end{align*}
Thus $X=\nabla\ln\phi$. In particular, $L\phi=0$. Therefore $\lambda=0$ is an eigenvalue of $L$ with boundary condition $B\phi=0$. To show that $\lambda=0$ is simple, let $\psi\in C^\infty(\Sigma)$ be such that $L\psi=0$ on $\Sigma$ with $B\psi=0$ along $\p\Sigma$ and define $u=\psi/\phi$. Simple computations show that  
\begin{align*}
\Delta u&=0\,\,\,\,\mbox{on}\,\,\,\,\Sigma,\\
\frac{\p u}{\p\nu}&=0\,\,\,\,\mbox{along}\,\,\,\,\p\Sigma.
\end{align*} 
Therefore $u=\psi/\phi$ is constant. Thus $\lambda=0$ is a simple eigenvalue whose associated eigenfunctions are given by $\psi=c\hspace{0.025cm}\phi$, $c\in\R$.

Now, observe that
\begin{align*}
L^*\phi^*&=-\Delta\phi^*-2\langle X,\nabla\phi^*\rangle-(|X|^2+\div X)\phi^*\\
&=-\Delta\phi^*-2\frac{\langle\nabla\phi,\nabla\phi^*\rangle}{\phi}-\left(\frac{|\nabla\phi|^2}{\phi^2}+\Delta\ln\phi\right)\phi^*\\
&=-\frac{\phi\Delta\phi^*+2\langle\nabla\phi,\nabla\phi^*\rangle+\phi^*\Delta\phi}{\phi}\\
&=-\frac{\Delta(\phi^*\phi)}{\phi}
\end{align*}
and 
\begin{align*}
B^*\phi^*&=\frac{\p\phi^*}{\p\nu}-\left(\II^{\p M}(N,N)-2\langle X,\nu\rangle\right)\phi^*\\
&=\frac{\p\phi^*}{\p\nu}+\II^{\p M}(N,N)\phi^*\\
&=\frac{\p\phi^*}{\p\nu}+\frac{\phi^*}{\phi}\frac{\p\phi}{\p\nu}\\
&=\frac{1}{\phi}\frac{\p}{\p\nu}(\phi^*\phi).
\end{align*}
Then $\phi^*\in C^\infty(\Sigma)$ satisfies $L^*\phi^*=0$ on $\Sigma$ with boundary condition $B^*\phi^*=0$ if, and only if, $\phi^*\phi$ is constant. Thus $\lambda^*=0$ is an eigenvalue of $L^*$ with Robin boundary condition $B^*\phi^*=0$ whose associated eigenfunctions are given by $\phi^*=c/\phi$, $c\in\R$. In particular, $\lambda^*=0$ is simple.
\end{proof}

The last auxiliary result of this section is a foliation lemma which we are going to use in the proof of Theorem \ref{thm.4}. It is similar to Lemma 2.3 in \cite{Galloway2018}.

\begin{lemma}\label{lemma.foliation}
Under the assumptions of Prop. \ref{prop.infinitesimal}, there exist a neighborhood $V\cong(-\epsilon,\epsilon)\times\Sigma$ of $\Sigma\cong\{0\}\times\Sigma$ in $M$ and a positive function $\varphi:V\to\R$ such that:
\begin{enumerate}
\item $g|_V$ has the orthogonal decomposition, 
\begin{align*}
g|_V=\varphi^2dt^2+\gamma_t,
\end{align*}
where $\gamma_t$ is the induced metric on $\Sigma_t\cong\{t\}\times\Sigma$.
\item Each $\Sigma_t$ is a free boundary hypersurface in $(M,g,K)$ with constant null mean curvature $\theta^+(t)$ with respect to the outward unit normal $N_t=\varphi^{-1}\frac{\p}{\p t}$, where $N_0=N$.
\item $\frac{\p\varphi}{\p\nu_t}=\II^{\p M}(N_t,N_t)\varphi$ along $\p\Sigma_t$, where $\nu_t$ is the outward unit normal of $\p\Sigma_t$ in $(\Sigma_t,\gamma_t)$.\label{condition.3}
\end{enumerate}
\end{lemma}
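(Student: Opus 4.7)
My plan is to apply the implicit function theorem to produce, for each small $t$, a free boundary hypersurface near $\Sigma$ with constant null mean curvature, using the Fredholm structure of $(L, B)$ supplied by Lemma \ref{lemma.formal.adjoint}. To parametrize nearby deformations, I would extend the unit normal $N$ of $\Sigma$ to a smooth vector field $\tilde N$ defined on a neighborhood of $\Sigma$ in $M$ and tangent to $\p M$ along $\p M$; this extension exists because $N|_{\p\Sigma}$ is already tangent to $\p M$ by the free boundary assumption. For small $w \in C^{2,\alpha}(\Sigma)$, let $\Sigma_w$ be the image of $\Sigma$ under the time-$w$ flow of $\tilde N$. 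By construction $\Sigma_w$ is properly embedded with $\p\Sigma_w \subset \p M$, though not yet orthogonal to $\p M$.

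I would then consider the map $\Phi : C^{2,\alpha}(\Sigma) \times \R \to C^{0,\alpha}(\Sigma) \times C^{1,\alpha}(\p\Sigma)$ whose first component is the pullback of $\theta^+(\Sigma_w) - \lambda$ to $\Sigma$ and whose second component is the pullback of the orthogonality defect $\langle N_w, \varrho\rangle$ from $\p\Sigma_w$ to $\p\Sigma$. Using \eqref{eq.first.variation.theta} together with the standard variation $\dot N = -\nabla^\Sigma v$ of the unit normal under a deformation by $vN$, one finds that the linearization of $\Phi$ at $(0, 0)$ is $(v, \mu) \mapsto (Lv - \mu,\ -Bv)$. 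Lemma \ref{lemma.formal.adjoint} identifies the kernel of $(L, B)$ with $\R\phi$ and the kernel of the adjoint $(L^*, B^*)$ with $\R\phi^*$. Since $\int \phi^*\, dv > 0$, the scalar parameter $\mu$ supplies exactly the missing direction needed to make $D\Phi(0,0)$ surjective, with one-dimensional kernel spanned by $(\phi, 0)$.

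Restricting $w$ to the transversal slice $\{w : \int w \phi^*\, dv = t\}$ for $t \in (-\epsilon, \epsilon)$ and invoking the implicit function theorem yields smooth curves $t \mapsto w(t)$ and $t \mapsto \lambda(t)$ with $w(0) = 0$, $\lambda(0) = 0$, and $\dot w(0) = \phi / \int \phi\phi^*\, dv$. Positivity of $\phi$ ensures that $\Sigma_t := \Sigma_{w(t)}$ foliates a neighborhood of $\Sigma$ by properly embedded free boundary hypersurfaces with $\theta^+$ equal to the constant $\lambda(t)$. To obtain the orthogonal decomposition $g|_V = \varphi^2 dt^2 + \gamma_t$, I would parametrize the neighborhood by flowing perpendicularly through the foliation: letting $\tau$ be the foliation parameter and setting $\varphi = |\nabla \tau|^{-1}$ gives $\p_t = \varphi N_t$ with $N_0 = N$. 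Condition \eqref{condition.3} then follows by differentiating the identity $\langle N_t, \varrho\rangle \equiv 0$ along $\p\Sigma_t$ in the direction $\p_t = \varphi N_t$ and using $\nu_t = \varrho$ on $\p\Sigma_t$, which yields precisely $\frac{\p\varphi}{\p\nu_t} = \II^{\p M}(N_t, N_t)\varphi$.

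The main obstacle is the careful setup of the two-component map $\Phi$: the orthogonality defect lives naturally on $\p\Sigma_w$, and pulling it back to $\p\Sigma$ in a way that makes its linearization exactly the Robin operator $-B$ from Lemma \ref{lemma.formal.adjoint} requires care with the choice of $\tilde N$ and its derivative along $\p M$. Once this bookkeeping is settled and standard Schauder estimates for Robin boundary value problems confirm the Fredholm character of $D\Phi(0,0)$, the implicit function theorem proceeds in direct analogy with the closed case treated by Galloway \cite{Galloway2018}.
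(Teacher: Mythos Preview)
Your proposal is correct and follows essentially the same route as the paper: parametrize nearby properly embedded hypersurfaces via the flow of a boundary-tangent extension of $N$, set up a two-component map recording $\theta^+-\lambda$ and the orthogonality defect, compute its linearization as $(L\cdot-\lambda,\,B\cdot)$, and use Lemma~\ref{lemma.formal.adjoint} together with the Fredholm alternative to invoke the implicit/inverse function theorem. The only cosmetic difference is that the paper handles the one-dimensional kernel by appending a third component $\int_\Sigma u\,dv$ to the map and applying the inverse function theorem directly, whereas you restrict to the transversal slice $\{\int w\phi^*\,dv=t\}$; these are equivalent bookkeeping devices, and your derivation of condition~\eqref{condition.3} by differentiating $\langle N_t,\varrho\rangle\equiv0$ is exactly the content of the reference \cite[Proposition~17]{Ambrozio} the paper cites.
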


To prove Lemma \ref{lemma.foliation}, we use ideas presented in \cite{AlaeeLesourdYau}, \cite{Ambrozio}, and \cite{Galloway2008}.

\begin{proof}
Let $Z$ be a smooth vector field on $M$ such that $Z_x=N_x$ for $x\in\Sigma$ and $Z_p\in T_p\p M$ for $p\in\p M$. Denote by $\Phi=\Phi(p,t)$ the flow of $Z$. Since $\Sigma$ is compact, there exists $\delta>0$ such that $\Phi(x,t)$ is well-defined for $(x,t)\in\Sigma\times(-\delta,\delta)$.

Fix $0<\alpha<1$ and define $B_\delta(0)=\{u\in C^{2,\alpha}(\Sigma);\|u\|_{2,\alpha}<\delta\}$. Given $u\in B_\delta(0)$, consider $\Sigma_u=\{\Phi(x,u(x));x\in\Sigma\}$ and let $\theta_u^+$ denote the null mean curvature of $\Sigma_u$ with respect to the (suitably chosen) outward unit normal $N_u$ of $\Sigma_u$. (Taking a smaller $\delta>0$ if necessary, we may assume that $\Sigma_u$ is a properly embedded hypersurface in $M$ for each $u\in B_\delta(0)$.) 

Define $\Theta:B_\delta(0)\times\R\to C^{0,\alpha}(\Sigma)\times C^{1,\alpha}(\p\Sigma)\times\R$ by 
\begin{align*}
\Theta(u,k)=\left(\theta_u^+-k,-\langle N_u,\varrho_u\rangle,\int_\Sigma udv\right),
\end{align*}
where $\varrho_u$ is the restriction of $\varrho$ to $\p\Sigma_u$. It follows from \eqref{eq.first.variation.theta} and \cite[Proposition 17]{Ambrozio} that the linearization of $\Theta$ at $(0,0)$ is given by
\begin{align*}
D\Theta_{(0,0)}(u,k)=\frac{d}{ds}\big|_{s=0}\Theta(su,sk)=\left(Lu-k,Bu,\int_\Sigma udv\right),\,\,\,\,(u,k)\in C^{2,\alpha}(\Sigma)\times\R.
\end{align*} 

{\bf Claim 1.} $D\Theta_{(0,0)}$ is an isomorphism. First, observe that if $(u,k)\in\mbox{ker}(D\Theta_{(0,0)})$, then
\begin{align}
Lu&=k\,\,\,\,\mbox{on}\,\,\,\,\Sigma,\label{eq.9}\\
Bu&=0\,\,\,\,\mbox{along}\,\,\,\,\p\Sigma,\nonumber\\
\int_\Sigma u&dv=0.\nonumber
\end{align}
Now, let $\phi^*>0$ be an eigenfunction of $L^*$ on $\Sigma$, with boundary condition $B^*\phi^*=0$, associated with the eigenvalue $\lambda^*=0$, which exists by Lemma \ref{lemma.formal.adjoint}. Then, multiplying \eqref{eq.9} by $\phi^*$ and integrating over $\Sigma$, we obtain
\begin{align*}
k\int_\Sigma\phi^*dv=\int_\Sigma\phi^*Ludv=\int_\Sigma uL^*\phi^*dv+\int_{\p\Sigma}uB^*\phi^*ds-\int_{\p\Sigma}\phi^*Buds=0.
\end{align*}
Thus $k=0$. In this case, $Lu=0$ on $\Sigma$, $Bu=0$ along $\p\Sigma$, and $\int_\Sigma udv=0$. From Lemma~\ref{lemma.formal.adjoint}, $u=c\hspace{0.025cm}\phi$, $c\in\R$, where $\phi>0$ is an eigenfunction of $L$ on $\Sigma$, with Robin boundary condition $B\phi=0$, associated with the eigenvalue $\lambda=0$. Since $\int_\Sigma udv=0$, we have $u=0$. This shows that $D\Theta_{(0,0)}$ is injective. 

To see that $D\Theta_{(0,0)}$ is onto, observe that, by the Fredholm alternative, the problem
\begin{align*}
\left\{
\begin{array}{rcl}
Lu&\!\!\!=\!\!\!&f\,\,\,\,\mbox{on}\,\,\,\,\Sigma\\
Bu&\!\!\!=\!\!\!&h\,\,\,\,\mbox{along}\,\,\,\,\p\Sigma
\end{array}
\right.
\end{align*}
has a solution if, and only if, 
\begin{align*}
\int_\Sigma\phi^*fdv+\int_{\p\Sigma}\phi^*hds=0.
\end{align*}
Therefore, given $(f,h,c)\in C^{0,\alpha}(\Sigma)\times C^{1,\alpha}(\p\Sigma)\times\R$, we can take
\begin{align*}
k_0=-\frac{\displaystyle\int_\Sigma\phi^*fdv+\int_{\p\Sigma}\phi^*hds}{\displaystyle\int_\Sigma\phi^*dv},
\end{align*}
$u_0\in C^{2,\alpha}(\Sigma)$ such that $Lu_0=k_0+f$ on $\Sigma$ and $Bu_0=h$ along $\p\Sigma$, which exists by the Fredholm alternative, and 
\begin{align*}
t_0=\frac{c-\displaystyle\int_\Sigma u_0dv}{\displaystyle\int_\Sigma\phi dv}.
\end{align*}
Straightforward computations show that 
\begin{align*}
D\Theta_{(0,0)}(u_0+t_0\phi,k_0)=(f,h,c).
\end{align*}
This finishes the proof of Claim 1.

It follows from the inverse function theorem that, for $s\in\R$ sufficiently small, say $|s|<\varepsilon$, there exist $u(s)\in B_\delta(0)$ and $k(s)\in\R$ such that $\Theta(u(s),k(s))=(0,0,s)$ with $u(0)=0$ and $k(0)=0$. By the chain rule, 
\begin{align*}
\left(Lu'(0)-k'(0),Bu'(0),\int_\Sigma u'(0)dv\right)=D\Theta_{(0,0)}(u'(0),k'(0))=\frac{d}{ds}\big|_{s=0}\Theta(u(s),k(s))=(0,0,1).
\end{align*}
Therefore, using the same arguments as before, we have $k'(0)=0$ and $u'(0)=c\hspace{0.025cm}\phi$, $c\in\R$. Observe that $c>0$ since $\int_\Sigma u'(0)dv=1$. Then, taking a smaller $\varepsilon>0$ if necessary, we obtain that $(\Sigma_{u(s)})_{|s|<\varepsilon}$ forms a foliation of a neighborhood of $\Sigma$ in $M$ by free boundary hypersurfaces with constant null mean curvature.

Finally, we can introduce coordinates $(t,x^i)$ in a neighborhood $V$ of $\Sigma$ such that, with respect to these coordinates, $V\cong(-\epsilon,\epsilon)\times\Sigma$, each slice $\Sigma_t\cong\{t\}\times\Sigma$ is a free boundary hypersurface in $M$ with constant null mean curvature $\theta^+(t)$ with respect to the outward unit normal $N_t$, and $\Sigma=\Sigma_0$. Furthermore, these coordinates can be chosen in a such way that $\frac{\p}{\p t}=\varphi N_t$ on $\Sigma_t$ for some positive function $\varphi:V\to\R$. Item \eqref{condition.3} follows from the free boundary condition (see \cite[Proposition 17]{Ambrozio}).
\end{proof}

\section{Proof of Theorem \ref{thm.4}}\label{section.4}

Let $\varphi$, $\Sigma_t$, and $\theta^+(t)$, $t\in(-\epsilon,\epsilon)$, be as in Lemma \ref{lemma.foliation}. From \eqref{eq.first.variation.theta},
\begin{align}\label{eq.10}
\frac{d\theta^+}{dt}=-\Delta\varphi+2\langle X,\nabla\varphi\rangle+\left(Q-|X|^2+\div X-\frac{1}{2}(\theta^+)^2+\theta^+\tau\right)\varphi,
\end{align}
where $\Delta=\Delta_t$, $X=X_t$, $\nabla=\nabla_t$, $Q=Q_t$, and $\div=\div_t$ are the respective entities associated with $\Sigma_t$ for each $t\in(-\epsilon,\epsilon)$. Taking a smaller $\epsilon>0$ if necessary, we may assume that $\tau\varphi\le c$ on $U\cong[0,\epsilon)\times\Sigma$ for some constant $c>0$. Observe that $\theta^+(t)\ge0$ for each $t\in[0,\epsilon)$, since $\Sigma$ is weakly outermost. Then, from \eqref{eq.10}, we have
\begin{align*}
L_t\varphi\triangleq-\Delta\varphi+2\langle X,\nabla\varphi\rangle+(Q-|X|^2+\div X)\varphi\ge\frac{d\theta^+}{dt}-c\hspace{0.0375cm}\theta^+=e^{c\hspace{0.025cm}t}\frac{d}{dt}(e^{-c\hspace{0.025cm}t}\theta^+)\,\,\,\,\mbox{on}\,\,\,\,\Sigma_t
\end{align*}
for each $t\in[0,\epsilon)$.

{\bf Claim 1.} $\frac{d}{dt}(e^{-c\hspace{0.025cm}t}\theta^+)\le0$ for $t\in[0,\epsilon)$. Suppose, by contradiction, that $\frac{d}{dt}(e^{-c\hspace{0.025cm}t}\theta^+)>0$ for some $t\in[0,\epsilon)$. In particular, $L_t\varphi>0$ on $\Sigma_t$. Then, acting as in the proof of Lemma \ref{lemma.8}, we obtain
\begin{align}\label{eq.11}
\frac{L_t\varphi}{\varphi}=-|Y|^2+\div Y+Q,
\end{align}
where $Y=X-\nabla\ln\varphi$, and 
\begin{align*}
u^2\frac{L_t\varphi}{\varphi}\le\div(u^2Y)+|\nabla u|^2+Qu^2
\end{align*}
for any smooth function $u$ on $\Sigma_t$. 

Let $u=u_t>0$ be an eigenfunction of $\L_t\triangleq-\Delta+Q$ on $\Sigma_t$, with boundary condition $\frac{\p u_t}{\p\nu_t}=\left(\II^{\p M}(N_t,N_t)-\langle X,\nu_t\rangle\right)u_t$, associated with the first eigenvalue $\lambda_1(\L_t)$. Then, acting as in the proof of Lemma \ref{lemma.8} and using that $L_t\varphi>0$, we have
\begin{align*}
0<\int_{\Sigma_t}u_t^2\frac{L_t\varphi}{\varphi}dv_t\le\int_{\Sigma_t}\left(|\nabla u_t|^2+Qu_t^2+\div(u_t^2Y)\right)dv_t=\lambda_1(\L_t)\int_{\Sigma_t}u_t^2dv_t.
\end{align*} 
In particular, $\lambda_1(\L_t)>0$. Therefore 
\begin{align*}
\L_tu_t=-\Delta u_t+\left(\frac{1}{2}R^{\Sigma_t}-P_t\right)u_t=\lambda_1(\L_t)u_t>0\,\,\,\,\mbox{on}\,\,\,\,\Sigma_t,
\end{align*}
where 
\begin{align*}
P_t\triangleq\mu+J(N_t)+\frac{1}{2}|\chi_t^+|^2.
\end{align*}
Using the interior and the boundary DEC, and acting as in the proof of Proposition \ref{prop.infinitesimal}, we obtain
\begin{align*}
P_t\ge0\,\,\,\,\mbox{and}\,\,\,\,\frac{\p u_t}{\p\nu_t}+H^{\p\Sigma_t}u_t\ge0.
\end{align*}
Then, from Lemma \ref{lemma.9}, $(\Sigma_t,\gamma_t)$ is Ricci flat, $P_t=0$, and $u_t$ is constant. In particular, $\L_tu_t=0$, which is a contradiction because $\L_tu_t=\lambda_1(\L_t)u_t>0$. Here we have used that $\Sigma_t\cong\{t\}\times\Sigma$ does not admit a metric of positive scalar curvature with minimal boundary. This finishes the proof of Claim 1.

Claim 1 gives that $e^{-c\,t}\theta^+(t)\le\theta^+(0)=0$ for $t\in[0,\epsilon)$. Then $\theta^+(t)=0$ for every $t\in[0,\epsilon)$, since $\Sigma$ is weakly outermost. Using this information into \eqref{eq.10}, we obtain $L_t\varphi=0$ for each $t\in[0,\epsilon)$. In particular, each such $\Sigma_t$ is a stable MOTS. Theorem \ref{thm.4} then follows from Proposition \ref{prop.infinitesimal} applied to $\Sigma_t$ for each $t\in[0,\epsilon)$.

\section{A Splitting Result}\label{section.5}

The class of initial data sets $(M,g,K)$ covered by Theorem \ref{thm.4} is expected to be fairly wide (see Example 4.2 in \cite{EichmairGallowayMendes}). Therefore, if we want to prove a stronger rigidity result, it is natural to assume some extra conditions. In what follows, we show (among other things) that, under natural \sl{convexity} and \sl{volume-minimizing} conditions, $(M,g)$ splits in an outer neighborhood of $\Sigma$.

We say that $K$ is \sl{$n$-convex} with respect to $g$ if, at every point of $M$, the sum of the smallest $n$ eigenvalues of $K$ with respect to $g$ is nonnegative (see \cite{EichmairGallowayMendes,Mendes}). In particular, if $K$ is $n$-convex, then $\tr_S K\ge0$ for every hypersurface $S\subset M$. We say that $\Sigma$ is \sl{outer volume-minimizing} if $\mbox{vol}(\Sigma)\le\mbox{vol}(S)$ for every hypersurface $S$ outside of, and homologous to, $\Sigma$.

\begin{thm}\label{thm.14}
Under the assumptions of Theorem \ref{thm.4}, if $K$ is $n$-convex and $\Sigma$ is outer volume-minimizing, then there exists an outer neighborhood $U$ of $\Sigma$ in $M$ such that the following conditions hold: 
\begin{enumerate}
\item $(\Sigma,\gamma_0)$ is Ricci flat with totally geodesic boundary, where $\gamma_0$ is the metric on $\Sigma$ induced from $(M,g)$.
\item $(U,g|_U)$ is isometric to $([0,\epsilon)\times\Sigma,dt^2+\gamma_0)$.
\item $K=a\hspace{0.025cm}dt^2$ on $U$, where $a$ depends only on $t\in[0,\epsilon)$.
\item $\mu=0$ and $J=0$ on $U$.
\item $\II^{\p M}=0$ and $(\iota_\varrho\pi)^\top=0$ on $U\cap\p M$.
\end{enumerate}
\end{thm}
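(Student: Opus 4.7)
The plan is to apply Theorem~\ref{thm.4} to get an outer foliation $(\Sigma_t)_{t\in[0,\epsilon)}$ of a neighborhood $U\cong[0,\epsilon)\times\Sigma$ of $\Sigma$ by free boundary MOTS with $\chi^+_t=0$, Ricci-flat induced metric $\gamma_t$ having totally geodesic boundary, and the saturations $\mu+J(N_t)=0$, $J|_{\Sigma_t}=0$, $(\iota_\varrho\pi)^\top|_{\p\Sigma_t}=0$, and then to use the two extra hypotheses in three escalating stages to rigidify this infinitesimal data into a product. For the first stage, $n$-convexity gives $\tr_{\Sigma_t}K\geq 0$ and so, via $\chi^+_t=0$, $H_t=-\tr_{\Sigma_t}K\leq 0$. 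The free boundary first variation of volume gives $\tfrac{d}{dt}\vol(\Sigma_t)=\int_{\Sigma_t}H_t\varphi\,dv_t\leq 0$, while the outer volume-minimizing property of $\Sigma$ (with $[0,t]\times\Sigma$ providing the homology) gives $\vol(\Sigma_t)\geq\vol(\Sigma)$. Combining, $\vol(\Sigma_t)\equiv\vol(\Sigma)$, so $H_t\equiv 0$ and $\tr_{\Sigma_t}K\equiv 0$ throughout $U$.

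For the second stage, at any $p\in U$, $n$-convexity reads $\min_V\tr_V K\geq 0$ over hyperplanes $V\subset T_pM$; since $\tr_{T_p\Sigma_t}K=0$ the minimum is attained at $V=T_p\Sigma_t$. The first-order condition, obtained by varying the unit normal, gives $K(N_t,Y)=0$ for every $Y\in T_p\Sigma_t$, i.e.\ $X_t\equiv 0$. By Lemma~\ref{lemma.formal.adjoint} applied to $\Sigma_t$, $X_t=\nabla\ln\phi_t$ where $\phi_t>0$ is the simple eigenfunction of $L_t$ at $\lambda=0$, so $\phi_t$ is constant on $\Sigma_t$. Since the lapse $\varphi(t,\cdot)$ solves $L_t\varphi=0$ with $B_t\varphi=0$ and $\lambda=0$ is simple, $\varphi(t,\cdot)=c(t)\phi_t$ depends only on $t$. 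Reparametrizing by $s=\int_0^t\varphi(\tau)\,d\tau$ we may assume $\varphi\equiv 1$, so $g|_U=dt^2+\gamma_t$ with $\p_t\gamma_t=2A_t=-2K|_{\Sigma_t}$.

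For the third stage, write $k:=K|_{\Sigma_t}$ and $\lambda:=K(\p_t,\p_t)$ (so $\tr K=\lambda$ since $\tr_{\Sigma_t}K=0$). In the adapted coordinates $\pi_{tt}=\pi_{ti}=0$ and $\pi_{ij}=k_{ij}-\lambda\gamma_{t,ij}$, and using the Christoffel symbols $\Gamma^k_{it}=-k^k_{\ i}$ read off from $\p_t\gamma_t=-2k$, a direct divergence calculation gives
\begin{equation*}
J(\p_t)=|k|^2.
\end{equation*}
But Theorem~\ref{thm.4} gives $J(N_t)=-|J|\leq 0$, so $|k|^2\leq 0$ and $k\equiv 0$. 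Hence $A_t=0$, $\gamma_t\equiv\gamma_0$, $(U,g|_U)$ is isometric to $([0,\epsilon)\times\Sigma,dt^2+\gamma_0)$, and $K=\lambda(t,x)\,dt^2$. A second divergence calculation in this product metric yields $J_i=-\p_i\lambda$; together with $J|_{\Sigma_t}=0$ this forces $\lambda=a(t)$, and so $K=a(t)\,dt^2$. Then $J\equiv 0$, and since $R^M=R^{\gamma_0}=0$, $\mu=\tfrac12(R-|K|^2+(\tr K)^2)=0$ on $U$. The boundary conclusions follow at once from the product structure: $\p M\cap U=[0,\epsilon)\times\p\Sigma$, $\p\Sigma$ is totally geodesic in $(\Sigma,\gamma_0)$ so $\II^{\p M}\equiv 0$ on $U\cap\p M$, and $\pi=-a(t)\gamma_0$ annihilates $\varrho$ so $(\iota_\varrho\pi)^\top\equiv 0$ on $U\cap\p M$.

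The main obstacle is the identity $J(\p_t)=|k|^2$: combined with the saturation $J(N_t)=-|J|$ from Theorem~\ref{thm.4}, this is what closes the $n$-convexity argument into full tangential rigidity rather than merely into the trace condition $\tr_{\Sigma_t}K=0$.
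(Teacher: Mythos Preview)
Your proof is correct and reaches all five conclusions, but the core mechanism differs from the paper's. Both arguments share Stage~1 (the $n$-convexity and outer volume-minimizing step giving $H_t=\tr_{\Sigma_t}K=0$), but then diverge. The paper first establishes $X_t=\nabla\ln\varphi$ on each leaf via the integral identity behind Lemma~\ref{lemma.formal.adjoint}, and then exploits that $\theta^-(t)\equiv0$: substituting $X^-=-\nabla\ln\varphi$ and $Q^-=-2|J|-\tfrac12|\chi_t^-|^2$ into the first variation of $\theta^-$ produces the single identity
\[
\Delta\varphi+\frac{|\nabla\varphi|^2}{\varphi}+\Big(|J|+\tfrac14|\chi_t^-|^2\Big)\varphi=0,
\]
whose integral (with the boundary term $\tfrac{\partial\varphi}{\partial\nu_t}=\II^{\partial M}(N_t,N_t)\varphi\ge0$) kills $\nabla\varphi$, $J$, and $\chi_t^-$ in one stroke. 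You instead extract a \emph{second} piece of information from $n$-convexity: since $V\mapsto\tr_VK$ attains its minimum $0$ at $T_p\Sigma_t$, the first-order condition at this critical point gives $K(N_t,Y)=0$ for $Y\in T_p\Sigma_t$, i.e.\ $X_t\equiv0$; constancy of $\varphi$ then follows from simplicity of the zero eigenvalue, and the tangential vanishing $k=0$ comes from the coordinate identity $J(\partial_t)=|k|^2$ combined with $J(N_t)=-|J|\le0$. Your route makes sharper use of the $n$-convexity hypothesis and avoids the $\theta^-$ variation entirely, at the cost of a Christoffel-symbol divergence computation; the paper's route stays inside the MOTS variational framework and needs no coordinate calculation, but uses $n$-convexity only through the trace bound.
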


\begin{proof}
Let $\varphi$ and $\Sigma_t$, $t\in[0,\epsilon)$, be as in the proof of Theorem \ref{thm.4}. Remember that $L_t\varphi=0$, $\chi_t^+=0$, $(\Sigma_t,\gamma_t)$ is Ricci flat with totally geodesic boundary, and $\mu=|J|=-J(N_t)$ on $\Sigma_t$ (see the proof of Proposition \ref{prop.infinitesimal}). In particular, 
\begin{align*}
Q=\frac{1}{2}R^{\Sigma_t}-(\mu+J(N_t))-\frac{1}{2}|\chi_t^+|^2=0.
\end{align*}
From equation \eqref{eq.11}, we have
\begin{align*}
\int_{\Sigma_t}|X-\nabla\ln\varphi|^2dv_t&=\int_{\Sigma_t}\div(X-\nabla\ln\varphi)dv_t\\
&=\int_{\p\Sigma_t}\left(\langle X,\nu_t\rangle-\frac{1}{\varphi}\frac{\p\varphi}{\p\nu_t}\right)ds_t\\
&=\int_{\p\Sigma_t}\left(\langle X,\nu_t\rangle-\II^{\p M}(N_t,N_t)\right)ds_t,
\end{align*}
where above we have used item \eqref{condition.3} of Lemma \ref{lemma.foliation}. On the other hand, it follows from the proof of Proposition \ref{prop.infinitesimal} that
\begin{align*}
\II^{\p M}(N_t,N_t)=H^{\p M}=|(\iota_\varrho\pi)^\top|=(\iota_\varrho\pi)^\top(N_t)=\langle X,\nu_t\rangle\,\,\,\,\mbox{along}\,\,\,\,\p\Sigma_t.
\end{align*}
Therefore
\begin{align*}
\int_{\Sigma_t}|X-\nabla\ln\varphi|^2dv_t=0,
\end{align*}
i.e.
\begin{align*}
X=\nabla\ln\varphi\,\,\,\,\mbox{on}\,\,\,\,\Sigma_t.
\end{align*}

Now, observing that $\tr_{\Sigma_t}K\ge0$, since $K$ is $n$-convex, we obtain
\begin{align*}%\label{eq.12}
H(t)\le\tr_{\Sigma_t}K+H(t)=\theta^+(t)=0,
\end{align*}
where $H(t)=\div_{\Sigma_t}N_t$ is the mean curvature of $\Sigma_t$ in $(M,g)$. Then the first variation formula of volume (see \cite[Proposition 14]{Ambrozio}) gives
\begin{align*}%\label{eq.13}
\frac{d}{dt}\mbox{vol}(\Sigma_t)=\int_{\Sigma_t}\varphi H(t)dv_t+\int_{\p\Sigma_t}\varphi\langle N_t,\nu_t\rangle ds_t=\int_{\Sigma_t}\varphi H(t)dv_t\le0.
\end{align*}
Therefore $t\longmapsto\mbox{vol}(\Sigma_t)$, $t\in[0,\epsilon)$, is a nonincreasing function. In particular, $\mbox{vol}(\Sigma_t)\le\mbox{vol}(\Sigma_0)$ for each $t\in[0,\epsilon)$. Thus, because we are assuming that $\Sigma=\Sigma_0$ is outer volume-minimizing, it follows that $\mbox{vol}(\Sigma_t)=\mbox{vol}(\Sigma_0)$ and, in particular, $H(t)=\tr_{\Sigma_t}K=0$, for each $t\in[0,\epsilon)$.

Remember that
\begin{align*}
g|_U=\varphi^2dt^2+\gamma_t,
\end{align*}
where $U\cong[0,\epsilon)\times\Sigma$. We claim that each leaf $\Sigma_t\cong\{t\}\times\Sigma$ is totally geodesic in $(M,g)$ and $\varphi$ is constant on $\Sigma_t$, i.e. $\varphi=\varphi(t)$ depends only on $t\in[0,\epsilon)$. Provided that is the case, it is not difficult to see that $\gamma_t$ does not depend on $t$ and, after the simple change of variable $ds=\varphi(t)dt$, the metric $g$ has the product structure $ds^2+\gamma_0$ on $U$, where $(\Sigma,\gamma_0)$ is Ricci flat with totally geodesic boundary, which guarantees items (1) and (2). In order to prove the claim, we consider the first variation of the \sl{inward} null mean curvature $\theta^-(t)$ of $\Sigma_t$ in $(M,g,K)$.

The first variation of $\theta^-(t)=\tr_{\Sigma_t}K-H(t)=0$ is given by
\begin{align}\label{eq.12}
-\Delta\varphi^-+2\langle X^-,\nabla\varphi^-\rangle+(Q^--|X^-|^2+\div X^-)\varphi^-=\frac{d\theta^-}{dt}=0,
\end{align} 
where $\varphi^-=-\varphi$,
\begin{align}\label{eq.13}
Q^-=\frac{1}{2}R^{\Sigma_t}-(\mu+J(-N_t))-\frac{1}{2}|\chi_t^-|^2=-2|J|-\frac{1}{2}|\chi_t^-|^2,
\end{align}
and $X^-$ is the tangent vector field of $\Sigma_t$ that is dual to the $1$-form $K(-N_t,\cdot)|_{\Sigma_t}=-K(N_t,\cdot)|_{\Sigma_t}$, i.e.
\begin{align}\label{eq.14}
X^-=-X=-\nabla\ln\varphi\,\,\,\,\mbox{on}\,\,\,\,\Sigma_t.
\end{align}
Substituting \eqref{eq.13} and \eqref{eq.14} into \eqref{eq.12}, with $\varphi^-=-\varphi$, we get
\begin{align*}
\Delta\varphi+\frac{|\nabla\varphi|^2}{\varphi}+\left(|J|+\frac{1}{4}|\chi_t^-|^2\right)\varphi=0.
\end{align*}
Therefore, integrating over $\Sigma_t$ and using that
\begin{align*}
\frac{\p\varphi}{\p\nu_t}=\II^{\p M}(N_t,N_t)\varphi=|(\iota_\varrho\pi)^\top|\varphi\ge0,
\end{align*}
we obtain
\begin{align*}
0\le\int_{\p\Sigma_t}\frac{\p\varphi}{\p\nu_t}ds_t=\int_{\Sigma_t}\Delta\varphi\hspace{0.025cm}dv_t=-\int_{\Sigma_t}\left(\frac{|\nabla\varphi|^2}{\varphi}+\left(|J|+\frac{1}{4}|\chi_t^-|^2\right)\varphi\right)dv_t\le0,
\end{align*}
that is,
\begin{align}\label{eq.15}
|\nabla\varphi|=|J|=|\chi_t^-|=0\,\,\,\,\mbox{on}\,\,\,\,\Sigma_t.
\end{align}
In particular, $\varphi$ is constant on $\Sigma_t$. On the other hand, $\chi_t^+=\chi_t^-=0$ gives that $K|_{\Sigma_t}=0$ and $\Sigma_t$ is totally geodesic in $(M,g)$.

Observe that $K(N_t,\cdot)|_{\Sigma_t}=0$, since $X=\nabla\ln\varphi=0$. Therefore, because $K|_{\Sigma_t}=0$ for each $t\in[0,\epsilon)$, we have $K=a\hspace{0.025cm}dt^2$ on $U\cong[0,\epsilon)\times\Sigma$. Then we can use that $d\tr K=\div K$ (since $J=0$) to see that $a$ is constant on each leaf $\Sigma_t$. This proves item (3).

Item (4) follows directly from equation \eqref{eq.15} and the fact that $\mu=|J|$ on each leaf $\Sigma_t$.

Finally, since $|(\iota_\varrho\pi)^\top|=\II^{\p M}(N_t,N_t)=\langle X,\nu_t\rangle=0$ along $\p\Sigma_t$, $\p\Sigma_t$ is totally geodesic in $(\Sigma_t,\gamma_t)$, and $\Sigma_t$ is free boundary in $(M,g)$, we obtain that $(\iota_\varrho\pi)^\top=0$ and $\II^{\p M}=0$ along $\p\Sigma_t$ for each $t\in[0,\epsilon)$. This finishes the proof of Theorem \ref{thm.14}
\end{proof}

A similar splitting result to Theorem \ref{thm.14} was established by M. Eichmair, G. J. Galloway, and the author \cite{EichmairGallowayMendes} for closed MOTS $\Sigma$ in $n=2,\ldots,6$ dimensions. It is important to mention that they \sl{did not} suppose $\Sigma$ is weakly outermost. Instead, under a suitable barrier condition, they \sl{proved} that $\Sigma$ is weakly outermost and, in particular, stable. (Example 5.3 in \cite{EichmairGallowayMendes} shows that the convexity and the volume-minimizing conditions are in fact necessary for the splitting result.) 

Alaee, Lesourd, and Yau \cite{AlaeeLesourdYau} established an analogous result to Theorem~\ref{thm.14} in $n=2$ dimensions, provided $\mu-|J|$ is bounded from below by a positive constant. As far as we know, they were the pioneers in considering this kind of problem for free boundary MOTS.

\bibliographystyle{amsplain}
\bibliography{bibliography.bib}

\end{document}